\newtheorem{Proposition}{Proposition}
  \newtheorem{Remark}{Remark}
  \newtheorem{Lemma}[Proposition]{Lemma}
  \newtheorem{Theorem}{Theorem}
 \newtheorem{Definition}[Proposition]{Definition}
 \newtheorem{Note}[Remark]{Note}
\def\L|{\left\|}
\def\R|{\right\|}
\def\epsilon{\varepsilon}
\def\blackslug{\hbox{\hskip 1pt \vrule width 4pt height 8pt depth 1.5pt
\hskip 1pt}}
\def\qed{\quad\blackslug\lower 8.5pt\null\par}
 \def\RR{\mathbb{R}}
\author{O. Costin$^1$, T. Kim$^{1}$ and S. Tanveer$^1$} \address{$^1$ Mathematics Department\\The Ohio State University\\Columbus, OH 43210 } 
\title{A quasi-solution approach to nonlinear problems--the case of Blasius Similarity solution}
\begin{document}
$ $ \vskip -0.2cm
\begin{abstract}
  Using the simple case of Blasius similarity solution,
  we illustrate a recently developed general method\cite{Costinetal},\cite{Dubrovin} 
  that reduces a strongly nonlinear problem into a weakly nonlinear analysis. The
  basic idea is to find a quasi-solution $F_0$ that satisfies the nonlinear problem and
  boundary conditions to within small errors. Then, by decomposing the true solution 
  $F=F_0+E$, a weakly nonlinear analysis of $E$, using contraction mapping theorem in
  a suitable space of functions provides the existence of solution as well as bounds on
  the error $E$. The quasi-solution construction relies on a combination of exponential
  asymptotics and standard orthogonal polynomial representations in finite domain.
  \end{abstract}

\vskip -2cm
\maketitle

\today

\section{Introduction and main results}

Nonlinear mathematical problems abound in vortex dynamics as they do in all
areas of the sciences. For the most part, available mathematical tools are limited
to numerical computations. While providing for valuable insights, computations do not usually address the
associated existence and uniqueness questions. In problems involving
infinite domain for which numerical computations are usually truncated to finite subdomains,
these questions are more than just of theoretical interest. For instance, determination of hetero-clinic
and homoclinic orbits of dynamical system play an important role in Langrangian chaos (see for instance
the review paper \cite{Aref}).
Yet, numerical computations of such two point boundary value problems 
cannot by themselves resolve the question whether or not such orbits exist in
the first place.  
Also, in many problems, such as in hydrodynamic 
stability, a clear understanding of the associated spectral problem is 
facilitated greatly by analytical representation of steady state solutions.       
This explains at least in part why 
analytical expressions for solutions to nonlinear problems remain an  
important area of research. 
Closed
form solutions however exist only for a small sub-class of problems
(essentially for integrable models). On the other-hand, if a
problem involves some small parameter $\epsilon$ (or a large
parameter) and the limiting problem is exactly solvable, then there
exist quite general asymptotic methods to obtain convenient
expansions for the perturbed problem.

Indeed, consider for instance the question of finding the solution to
$\mathcal{N} [ u ,\epsilon]=0$, where $\mathcal{N}$ is a (possibly nonlinear) differential operator 
in some space of functions satisfying boundary/initial
conditions and that $u_0$ is the solution at $\epsilon=0$. Existence and uniqueness of a solution
$u$ as well bounds on the error $E=u-u_0$ 
may be found as follows. We write
$$ L\,E=- \delta-\mathcal{N}_1(E)$$ 
where $L=\frac{\partial\mathcal{N}}{\partial u}|_{u=u_0}$ is the Frechet derivative of $\mathcal{N}$, 
$\delta = \mathcal{N} [u_0]$ is the residual and 
$\mathcal{N}_1(E)=\mathcal{N}(u_0+ E)-L\,E  =O(E^2)$. Assuming 
$L$ to be invertible in a suitable space of functions subject to appropriate initial/boundary
conditions, and using the fixed point or contractive mapping theorems in an adapted norm, 
the small nonlinearity $\mathcal{N}_1(E)$ can be controlled.

Recently, a relatively general strategy has been 
employed \cite{Costinetal}-\cite{Dubrovin} in problems without explicit small or large parameters. The approach 
uses exponential asymptotic methods and classical orthogonal polynomial techniques  
to find a function $u_0$, we call it a quasi-solution, 
which is a very accurate global approximation of the sought solution $u$, 
in the sense that 
$\delta=\mathcal{N}(u_0)$ 
is small in a suitable norm and the boundary conditions are satisfied to within small errors. 
Once this is accomplished, a 
perturbative approach similar with the one above  applies 
with the role of $\epsilon$  played by the norm of $\delta $ and one obtains  
an actual solution $u$ by controlling the equation satisfied by $E=u-u_0$.
The method has been generalized to
integro-differential equations arising in
steady 2-D deep water waves\cite{Water}; therefore, it might be
expected that this should be generalizable to
vortex patches as well. Indeed, quasi-solution approach is more general and
can be generalized to PDEs as well, though the details in higher dimensions
are computationally challenging.
The only crucial conceptual barrier is the ability to
determine suitably appropriate bounds on $\mathcal{L}^{-1}$.

Here, we explain 
the stategy for the relatively simple but  
well-known Blasius similarity solution \cite{Blasius} arising in boundary
layer fluid-flow past a flat plate.
Since the audience is mostly
non-mathematicians, we limit ourselves to presenting the theorems 
and explaining the implications while omitting technical proofs given elsewhere \cite{BlasiusCT}.
We also elucidiate the construction of quasi-solution and  
give some indication on how error estimates are obtained.
In the last section, we present new results when 
boundary layer similarity solution is required to satisfy a more general
boundary condition than the usual no-slip condition. The point is to briefly
explain how
parameters can be incorporated in a quasi-solution formulation.
The detailed proofs will appear elsewhere \cite{Kim}.

The classic Blasius similarity solution to boundary layer equations past a semi-infinite plate 
satisfies
\begin{equation}
\label{1}
f^{\prime \prime \prime} (x) + f (x) f^{\prime \prime} (x) = 0  ~~ \, {\rm for} ~x \in (0, \infty) 
\end{equation}
with no-slip boundary conditions:
\begin{equation}
\label{2}
f(0) = 0 \ , f^\prime (0) = 0 \ ,   \lim_{x \rightarrow +\infty} f^\prime (x) = 1 
\end{equation}
A generalization of (\ref{2}) is also of interest
(see for eg. \cite{Hussaini}, \cite{Brighi})
and involves modification of the no-slip boundary conditions:
\begin{equation}
\label{2.1.0}
f(0) = {\tilde \alpha} \ , f^\prime (0) = {\tilde \gamma} \ ,   \lim_{x \rightarrow +\infty} f^\prime (x) = 1 
\end{equation}
The Blasius similarity solution have garnered much attention since Blasius \cite{Blasius}
derived it\footnote{The equation
in the original Blasius' paper has a coefficient $\frac{1}{2}$ for $f f^{\prime}$;
however the
change of variable
$x \rightarrow x/\sqrt{2}$, $f \rightarrow f/\sqrt{2}$ transforms (\ref{1}) into Blasius' original equation. 
Thus, $f^{\prime \prime} (0) = 0.469600 \pm 0.000022$ 
transforms to $f^{\prime \prime} (0) = 0.3320574 \pm 0.000016$ 
in the original variables.} as an exact solution to Prandtl boundary layer equations.   
Existence and uniqueness were first proved 
by Weyl in \cite{Weyl}. Issues of existence and
uniqueness for this and related equations
have been considered as well by many 
authors (see for eg. \cite{Hussaini}, \cite{Brighi}, the latter being a review paper).
Hodograph transformations 
\cite{Callegari} allow a 
convergent power series representation in the entire domain,
but the convergence is slow at the edge of the domain and the
representation is not quite convenient in finding an approximation to $f$ directly.
Empirically, there has been quite a bit of interest in obtaining simple
expressions for Blasius and related similarity solutions. Liao \cite{Liao}
for instance
introduced a formal method for
an emprically accurate approximation; the theoretical
basis for this procedure and its limitations remain however unclear.
We are unaware of any rigorous error control for this or any
other efficient approximation in terms of simple functions.
Also, we are unaware of any systematic procedure that allows for
analytical representation of solution to any desired accuracy.

In \cite{Weyl}, using a transformation introduced
by Topfer \cite{Topfer}, 
it is also proved that $f$ in \eqref{1}, \eqref{2} can be expressed as
\begin{equation}
\label{2.1}
f (x) = a^{-1/2} F \left ( a^{-1/2} x \right ),
\end{equation}
where
$F$ satisfies the initial value problem
\begin{equation}
\label{3}
F^{\prime \prime \prime} (x) + F (x) F^{\prime \prime} (x) = 0  ~~ \, {\rm for} ~x \in (0, \infty) 
\end{equation}
with initial conditions
\begin{equation}
\label{4}
F(0) = 0 \ , F^\prime (0) = 0 \ ,   F^{\prime \prime} (0) = 1, 
\end{equation}
In \eqref{2.1}, $\lim_{x \rightarrow \infty} F^\prime (x)=a\in\RR^+$ (cf. \cite{Weyl,Topfer}).
More general boundary conditions (\ref{2.1.0}) on $f$ translate into
following initial conditions on $F$: 
\begin{equation}
\label{4.0}
F(0) = \alpha \ , F^\prime (0) = \gamma \ , F^{\prime \prime} (0) = 1  \ ,
\end{equation}
where $\alpha= a^{1/2} {\tilde \alpha}$ and $\gamma = a {\tilde \gamma}$.
Note that the solution $f$ to the original problem is obtained
through the transformation (\ref{2.1}); 
the appropriately non-dimensionalized wall stress is given by
\begin{equation}
\label{4.1}
f^{\prime \prime} (0) = a^{-3/2} 
\end{equation}
It is to be emphasized that this transformation, though convenient, is by no means necessary to
construct a quasi-solution, since a quasi-solution only needs to satisfy initial/boundary conditions
approximately. This will be clearer in the error analysis where it will be seen that
nonhomogeneous initial/boundary conditions
are allowable as long as they are small.
For this reason, the methodology outlined here can be extended to more
general two point boundary value
problems.

\section{Main Results for $(\alpha, \gamma) = (0,0)$}

Let
\begin{equation}
\label{6}
P(y) = \sum_{j=0}^{12} \frac{2}{5 (j+2)(j+3)(j+4)} p_j y^{j} \, 
\end{equation}
where $[p_0,...,p_{12}]$ are given by
{\small \begin{multline}
\Big[
-\frac{510}{10445149}, 
-\frac{18523}{5934}, -\frac{42998}{441819}, \frac{113448}{81151},-\frac{65173}{22093},\frac{390101}{6016},-\frac{2326169}{9858}, \\ \frac{4134879}{7249}, -\frac{1928001}{1960},
\frac{20880183}{19117} , -\frac{1572554}{2161} ,  \frac{1546782}{5833} , -\frac{1315241}{32239}\Big]
\end{multline}}
Define
\begin{equation}
\label{10.2}
t(x)=\frac{a}{2}(x+b/a)^2,
\ I_0 (t) = 1 - \sqrt{\pi t} e^{t} {\rm erfc} (\sqrt{t})  \ , ~~ 
J_0 (t) = 1 - \sqrt{2 \pi t} e^{2t} {\rm erfc} (\sqrt{2t}) \ ,
\end{equation}
where ${\rm erfc}$ denotes the complementary error function and let
\begin{equation}
\label{10.1}
q_0 (t) = 
2 c \sqrt{t} e^{-t} I_0  + 
c^2 e^{-2t} \left ( 2 J_0 - I_0 - I_0^2 \right ) ,
\end{equation}
The theorem below provides an accurate representation of solution $F$ to \eqref{3}, \eqref{4}.
\begin{Theorem}
\label{Thm1}Let $F_0$ be defined by
\begin{equation}
\label{6.0}
F_0 (x) =
\left\{ \begin{array}{cc}
\frac{x^2}{2} + x^4 P \left (\frac{2}{5} x \right ) \  \text{for $x\in [0,\frac52]$} \\
   a x + b + \sqrt{\frac{a}{2t (x)}} q_0 ( t(x)) \  \text{for $x > \frac52$}
\end{array}\right.
\end{equation}
Then, there is a unique triple $(a,b,c)$ close to $(a_0,b_0,c_0)= \left ( \frac{3221}{1946}, 
-\frac{2763}{1765}, \frac{377}{1613} \right )$ in the sense that $(a,b,c)\in\mathcal{S}$ where
\begin{equation}
\label{8}
\mathcal{S} = \left \{ (a, b, c)\in\RR^3: \sqrt{(a-a_0)^2+\frac{1}{4} (b-b_0)^2+
\frac{1}{4} (c-c_0)^2 } \le \rho_0 := 5 \times 10^{-5} \right \}
\end{equation}
with the property that $F_0$ is a representation of the actual solution $F$ 
to the initial value problem (\ref{3})-(\ref{4}) within small errors. More precisely,
\begin{equation}
\label{5}
F(x) = F_0 (x) + E (x)  \ ,
\end{equation}
where the error term $E$ satisfies
\begin{equation}
\label{errBoundsE}
\| E^{\prime \prime} \|_{\infty} \le 3.5 \times 10^{-6} \ ,  
\| {E}^{\prime} \|_{\infty} \le 4.5 \times 10^{-6}  \ ,  
\| {E} \|_{\infty} \le 4 \times 10^{-6}   \ \text{on $[0,\frac52]$}
\end{equation}
and for $x\ge \frac52$
\begin{multline}\label{eq15}
\Big |  {E}  \Big | 
\le 1.69 \times 10^{-5}  t^{-2} e^{-3 t}   \ ,    
\Big | \frac{d}{dx}  E  \Big | 
\le 9.20 \times 10^{-5} t^{-3/2} e^{-3 t}   \\
\Big | \frac{d^2}{dx^2}  E  \Big | 
\le 5.02 \times 10^{-4} t^{-1} e^{-3 t}   
\end{multline}
\end{Theorem}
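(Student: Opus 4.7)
The plan is to follow the quasi-solution and contraction mapping strategy outlined in the introduction, treating the two pieces of $F_0$ separately and gluing them at $x=5/2$. I would first fix $(a,b,c)$ by requiring $C^2$ matching of the two pieces at $x=5/2$, which amounts to three analytic equations in three unknowns. Evaluating the three mismatches at the ansatz $(a_0,b_0,c_0)$ using certified interval arithmetic should show they are already $O(\rho_0)$; bounding the smallest singular value of the $3\times 3$ Jacobian from below and invoking a quantitative implicit-function or Newton argument then produces a unique $(a,b,c)\in\mathcal{S}$ at which the mismatches vanish, making $F_0\in C^2[0,\infty)$. The initial conditions $F_0(0)=F_0'(0)=0$, $F_0''(0)=1$ hold automatically from the polynomial piece. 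I would next bound the residual $\delta := F_0''' + F_0 F_0''$: on $[0,5/2]$ it is an explicit polynomial accessible by interval arithmetic, while on $(5/2,\infty)$ the erfc-based functions $I_0,J_0$ in $q_0$ are designed so that the $e^{-t}$ and $e^{-2t}$ orders cancel in $\delta$, leaving $\delta = O(c^3 e^{-3t})$ with a computable constant.

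Writing $F = F_0 + E$ with $E(0)=E'(0)=E''(0)=0$, the error satisfies
\begin{equation*}
LE := E''' + F_0 E'' + F_0'' E \;=\; -\delta - EE''.
\end{equation*}
The crux is constructing an explicit right inverse $L^{-1}$ with quantitative norm bounds in the sup-norm on $[0,5/2]$ and in weighted norms with weights $t^{2}e^{3t}$, $t^{3/2}e^{3t}$, $te^{3t}$ on $(5/2,\infty)$ that exactly match \eqref{errBoundsE} and \eqref{eq15}. Three independent solutions of $LG=0$ arise from the symmetries of the Blasius equation: $F_0'$ (translation) and $xF_0'+F_0$ (scaling) are approximate homogeneous solutions modulo terms proportional to $\delta$ and its derivatives, and a third, rapidly decaying mode behaving like $e^{-t}$ times algebraic factors in $t$ comes from reduction of order. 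Their asymptotic behavior fixes the Wronskian up to a computable constant and yields a variation-of-parameters formula for $L^{-1}$.

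With $\|L^{-1}\delta\|\le K\|\delta\|$ and the quadratic bound $\|L^{-1}(EE'')\|\le K'\|E\|^{2}$ on a ball of radius $r$ sized to match \eqref{errBoundsE} and \eqref{eq15}, the inequalities $K\|\delta\|+K'r^{2}\le r$ and $2K'r<1$ close the contraction and produce the unique fixed point $E$ with the stated bounds. Uniqueness of $(a,b,c)\in\mathcal{S}$ then follows from the non-singular Jacobian at the matching step.

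The hard part will be extracting rigorous and sufficiently tight constants in $\|L^{-1}\|$. On $(5/2,\infty)$ the variation-of-parameters integrals must be estimated against a weight precisely calibrated to the exponential decay of $F_0'-a$, so that the product $\mathcal{G}\,\delta$ retains the rate $t^{-k}e^{-3t}$ demanded in \eqref{eq15}; any slackness in the exponent would destroy the estimate. On $[0,5/2]$ the challenge is arithmetic rather than conceptual: the constants of size $10^{-6}$ in \eqref{errBoundsE} require certified numerics, typically interval arithmetic or a rigorous Chebyshev expansion carried to high precision.
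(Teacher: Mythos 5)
Your overall architecture (quasi-solution, residual bounds, inversion of the linearized operator, contraction) is the right one, but your first step --- fixing $(a,b,c)$ by requiring the two pieces of $F_0$ to join in $C^2$ at $x=\tfrac52$ --- is a genuine misstep that would prevent you from obtaining the far-field bounds \eqref{eq15}. The triple in the theorem is \emph{not} the one that makes $F_0$ continuous: the Remark following the theorem states explicitly that the $(a,b,c)$ for which $F=F_0+E$ solves \eqref{3}--\eqref{4} leaves $F_0$ discontinuous at $\tfrac52$, and that the $C^2$ choice is the \emph{different} point \eqref{eqabcCont} of $\mathcal{S}$, obtained by iterating the matching map with the errors set to zero. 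The reason is that the true solution launched from $x=0$ arrives at $x=\tfrac52$ as $F_0+E$ with $E(\tfrac52),E'(\tfrac52),E''(\tfrac52)$ small but nonzero, so its continuation to $[\tfrac52,\infty)$ has asymptotic parameters determined by these perturbed Cauchy data, not by $F_0$ alone. If you choose $(a,b,c)$ from $F_0$ only, then generically $\lim_{x\to\infty}F'\ne a$ and $F-F_0$ contains a term $(a'-a)x+(b'-b)$, which is incompatible with $|E|\le 1.69\times10^{-5}\,t^{-2}e^{-3t}$. The correct matching conditions are \eqref{16}--\eqref{17}: they equate the \emph{true} left solution of Proposition \ref{Prop2} with the \emph{true} right solution $ax+b+\sqrt{a/(2t)}\,(q_0+\mathcal{E})$ of Proposition \ref{Prop3} (which holds for all admissible $(a,b,c)$, $c$ playing the role of the free connection constant), and therefore involve the error terms $E$ and $h$; one then solves $\mathbf{A}=\mathbf{N}[\mathbf{A}]$ by the contraction of Lemma \ref{lemMatch} on $\mathcal{S}_A$. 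Your quantitative Newton/implicit-function step is salvageable, but it must be applied to this error-dependent system, not to the mismatch of the quasi-solution pieces.

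Two secondary points. The paper never constructs the fundamental system explicitly: your symmetry candidates $F_0'$ and $xF_0'+F_0$ solve the linearization only about the \emph{true} solution, so about $F_0$ they are approximate solutions with corrections driven by $\delta$ and its derivatives that would themselves need to be controlled; the paper instead bounds $\|\mathcal{G}\|$ and $\sup|\Phi_j''|$ indirectly by the energy estimates \eqref{12.14.9}--\eqref{12.14.18}. Also, a single contraction over all of $[0,\tfrac52]$ is unlikely to close with constants of size $10^{-6}$, since the Gronwall-type factors grow exponentially with interval length; the paper subdivides $[0,\tfrac52]$ into subintervals and propagates the Cauchy data from one to the next.
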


\begin{Remark}{\rm 
Certainly, $F$ is smooth since it is is an actual 
solution of \eqref{3},\eqref{4}, which exists on $[0, \infty )$ 
and is unique, see \cite{Weyl}. 
However, the particular choice $(a, b, c) \in \mathcal{S}$ in Theorem \ref{Thm1} needed in order for 
$F=F_0+{E}$ to solve (\ref{3})-(\ref{4}) does not ensure continuity of
the approximate solution $F_0$ at $x=\frac{5}{2}$. Nonetheless, 
if  $F_0,F_0',F_0''$ are needed to be continuous, this can be achieved 
by a slightly different choice of $(a,b,c)\in \mathcal{S}$  (see Remark 
\ref{remCont}), namely
\begin{equation}
\label{eqabcCont}
(a, b, c) =  
\left (1.6551904561499...,-1.565439826457..,0.233728727537...
\right ). 
\end{equation}
}

\rm Note also that \eqref{eq15} implies 
not only small absolute errors 
(that, in the far field hold even for the approximation of $F^{\prime \prime}$ 
by zero) but also
very small {\em relative} errors on $\left [ \frac{5}{2}, \infty \right )$. 
\end{Remark}
\begin{Definition}{\rm 
\label{Def1}
Let $a_{l} = a_0 -\rho_0$, $a_r = a_0+\rho_0$, $b_l = b_0 - 2 \rho_0$,
$b_r = b_0 + 2 \rho_0$, $c_l = c_0 - 2 \rho_0$ and $c_r = c_0 + 2 \rho_0$. We see that $(a,b,c) \in \mathcal{S}$, implies that $a \in [a_l, a_r]$, $b \in [b_l, b_r]$,
$c \in (c_l, c_r)$.
We define $t_m = \frac{a}{2} \left ( \frac{5}{2}
+ \frac{b}{a} \right )^2 $ and note that $x \in [\frac{5}{2}, \infty )$ corresponds to
$t \in [t_m, \infty )$ and  $t_m \in 
\left ( \frac{a_l}{2} 
\left [ \frac{5}{2} + \frac{b_l}{a_l} \right ]^2 , 
\frac{a_r}{2} 
\left [ \frac{5}{2} + \frac{b_r}{a_r} \right ]^2 \right )  
=: \left (t_{m,l}, t_{m,r} \right ) = \left ( 1.998859\cdots, 1.999438 \cdots \right )$. 
}\end{Definition}

\begin{Remark}{\rm 
The error bounds proved for ${E}$ in Theorem \ref{Thm1}
are likely a 10 fold over-estimate. Comparison with the 
numerically calculated $F$ suggests that
${|}F-F_0 {|}$, ${|}F^{\prime}-F_0^\prime {|}$ and $
{|}F^{\prime \prime} - F_0^{\prime \prime} {|}$ in $\left [0, \frac{5}{2} \right ]$
are at most $2 \times 10^{-7}, 2 \times 10^{-7}$ and $5 \times 10^{-7}$ respectively. Using the nonrigorous bounds on ${E}$ and
its derivatives reduces the 
$\rho_0$ in the definition of $\mathcal{S}$ 
from $5 \times 10^{-5}$ to
$1.4 \times 10^{-5}$. 
It is thus likely that
$(a, b, c) \approx (a_0, b_0, c_0)$ with
five (rather than the proven four) digits accuracy. Further, there is no theoretical
limitation in the accuracy in this approach. Higher accuracy will require a higher
order or piecewise polynomial expressions in $\left [0, \frac{5}{2} \right ]$ and using
a higher order truncation of the series (\ref{14.3}) for $q_0$, as explained in the ensuing.}
\end{Remark}

The proof of Theorem \ref{Thm1} rests on the following three propositions; we will discuss
the idea behind the proofs of these propositions in later sections.
\begin{Proposition}
\label{Prop2}
The error term 
$E(x) = F(x) - F_0 (x)$  
verifies the equation
\begin{equation}
\label{eqE}
\mathcal{L} [E] := 
E^{\prime \prime\prime} - F_0 E^{\prime \prime} - F_0^{\prime \prime} E = - F_0^{\prime \prime \prime} - F_0 F_0^{\prime \prime} 
- E E^{\prime \prime}
\end{equation}
\begin{equation}
\label{eqEBC}
E(0) = 0 = E^{\prime} (0) = E^{\prime \prime} (0) 
\end{equation}
and satisfies the bounds 
(\ref{errBoundsE}) on $I = \left [ 0, \frac{5}{2} \right ]$
\end{Proposition}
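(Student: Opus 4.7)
The plan has two parts: derive (\ref{eqE})--(\ref{eqEBC}) by direct substitution, then carry out a rigorous contraction-mapping argument on $I = [0, 5/2]$ to obtain (\ref{errBoundsE}). The equation is immediate: substituting $F = F_0 + E$ into $F''' + F F'' = 0$, expanding $(F_0 + E)(F_0 + E)''$, and using that $F$ solves the Blasius ODE gives, after rearrangement, exactly (\ref{eqE}). The homogeneous initial data (\ref{eqEBC}) hold because $F_0(0) = 0$, $F_0'(0) = 0$, $F_0''(0) = 1$ may be read directly from the polynomial branch $x^2/2 + x^4 P(2x/5)$ of (\ref{6.0}), matching $F(0), F'(0), F''(0)$ in (\ref{4}).

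For the bound on $I$, I would pose a fixed-point problem $E = \mathcal{T}[E]$ with $\mathcal{T}[E] := -\mathcal{L}^{-1}\bigl(R_0 + E E''\bigr)$, where $R_0 := F_0''' + F_0 F_0''$ is the (known, small) residual and $\mathcal{L}^{-1}$ is the solution operator for $\mathcal{L}[v] = g$, $v(0)=v'(0)=v''(0)=0$. The quantitative ingredients are: (i) a sharp upper bound $\|R_0\|_\infty \le \rho_R$ on $I$; since $F_0$ is a polynomial of known degree with rational coefficients, $R_0$ is itself an explicit polynomial and can be bounded rigorously by interval-arithmetic or coefficient-wise evaluation; (ii) explicit linear constants $K_0, K_1, K_2$ with
\begin{equation*}
\|(\mathcal{L}^{-1} g)^{(k)}\|_\infty \le K_k \|g\|_\infty \quad (k=0,1,2),
\end{equation*}
obtained by constructing three linearly independent solutions of $\mathcal{L}[v]=0$ on $I$, applying variation of parameters, and bounding the resulting Green kernel.

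In the Banach space $\mathcal{B} = \{E\in C^2(I) : E(0)=E'(0)=E''(0)=0\}$ under the weighted norm $\|E\| = \max\{\alpha_0\|E\|_\infty, \alpha_1\|E'\|_\infty, \alpha_2\|E''\|_\infty\}$, the ball $B_\rho$ is invariant under $\mathcal{T}$ once $\rho$ slightly exceeds $(\max_k K_k)\,\rho_R$, and $\mathcal{T}$ is a strict contraction once the Lipschitz constant induced by the $E E''$ nonlinearity, bounded by a fixed multiple of $\rho\,\max_k K_k/\min_k \alpha_k$, is below one. Tuning $\rho$ and $\alpha_k$ so that $\rho/\alpha_k$ matches the three target values $4\times 10^{-6}$, $4.5\times 10^{-6}$, $3.5\times 10^{-6}$ produces (\ref{errBoundsE}) for the unique fixed point $E$.

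The principal obstacle is securing constants $K_k$ sharp enough to close the argument at the $10^{-6}$ level. A naive Gronwall estimate grows like $\exp(\|F_0\|_\infty \cdot 5/2)$ and overshoots by several orders of magnitude. I expect one must either exploit near-explicit solutions of $\mathcal{L}[v]=0$ (for instance a zero-mode coming from scaling symmetries of (\ref{3})), or use a validated-numerics approach where the fundamental matrix of $\mathcal{L}$ is computed by a rigorous Taylor-model or Chebyshev scheme on a fine subdivision of $I$. Once such bounds are in hand, verifying ball invariance and contractivity is routine.
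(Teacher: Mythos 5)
Your derivation of (\ref{eqE})--(\ref{eqEBC}) is fine (modulo the sign typo already present in (\ref{eqE}) itself: direct substitution gives $E'''+F_0E''+F_0''E$ on the left, as in (\ref{12.2})), and your overall architecture --- bound the residual $R=F_0'''+F_0F_0''$, invert the linearization with given initial data, close a contraction --- is the same as the paper's. The residual bound is also handled essentially as you propose: $R$ is an explicit degree-$30$ polynomial with rational coefficients, and the paper bounds it on a $14$-piece subdivision by local Taylor re-expansion (or, alternatively, by the $\ell^1$ norm of Chebyshev coefficients), obtaining $|R|\le 6.73\times 10^{-7}$ as in (\ref{12.1}).

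The gap is exactly the step you yourself flag as ``the principal obstacle'' and then defer to two unexecuted alternatives (a scaling zero mode, or a validated computation of the fundamental matrix). Neither is carried out, so you have no constants $K_k$ and hence no proof of (\ref{errBoundsE}); moreover the zero-mode idea is delicate here because $F_0$ is not an exact solution, so the symmetry only produces an approximate null vector. The paper's actual resolution is an energy argument requiring no knowledge of the fundamental system: multiply $\mathcal{L}\phi=r$ by $2\phi''$ and integrate to get (\ref{12.14}); the term $-2F_0(\phi'')^2$ is dissipative because $F_0\ge 0$ on $I$, and after expressing $\phi$ through $\phi''$ via (\ref{12.8.0}) and applying Cauchy--Schwarz and Gronwall, the growth factor is $\exp\left[\tfrac12\int Q\right]$ with $Q$ built from $F_0''-2F_0+1$ plus a $\tfrac14(x-x_l)^4F_0''$ correction --- not from $\|F_0\|_\infty$, so your estimate of the naive loss does not apply to this version. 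Since $F_0''-2F_0+1$ changes sign at $x_c\approx 1.322$ and is negative beyond, and since the quartic correction is kept small by working on the three subintervals $[0,x_c]$, $[x_c,2]$, $[2,2.5]$ (propagating $E(x_l)$, $E'(x_l)$, $E''(x_l)$ across the joints), the resulting bounds (\ref{12.14.9})--(\ref{12.14.18}) are of order one and the contraction closes at the $10^{-6}$ level. Note also that the paper contracts in the sup norm of $E''$ alone, as in (\ref{12.3}), recovering $E$ and $E'$ by integration, rather than in a single weighted $C^2$ ball on all of $I$; the subinterval-by-subinterval propagation is essential because the energy bounds degrade with interval length.
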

\begin{Proposition}
\label{Prop3}
For given $(a, b, c)$ with $a > 0$, $ |c| < \frac{1}{4}$, in the domain 
$x \ge - \frac{b}{a} + \sqrt{\frac{2T}{a}}$, for $T \ge 1.99$, which
corresponds to 
$t \ge T \ge 1.99$, there exists unique solution to (\ref{3}) in the form 
\begin{equation}
\label{0.0}
F(x) = a x + b + \sqrt{\frac{a}{2 t(x)}} q (t(x))
\end{equation}
that satisfies the condition $\lim_{t \rightarrow \infty} \frac{q (t)}{\sqrt{t}} \rightarrow 0$.
Furthermore, 
\begin{equation}
q(t) = q_0 (t) + \mathcal{E} (t) 
\end{equation}
where $\mathcal{E}$ is small and satisfies the following error bounds:
\begin{equation}
\label{eqn:23}
\Big | \mathcal{E} (t) \Big | 
\le 1.6667 \times 10^{-4}  \frac{e^{-3 t}}{9 t^{3/2}}     
\end{equation}
\begin{equation}
\label{eqn:24}
\Big | \mathcal{E}^{\prime} (t) - 
\frac{1}{2t} \mathcal{E} (t) \Big |
\le 1.6667 \times 10^{-4}  \frac{e^{-3 t}}{3 t^{3/2}}     
\end{equation}
\begin{equation}
\label{eqn:25}
\Big | \sqrt{t} \mathcal{E}^{\prime \prime} (t) - 
\frac{1}{\sqrt{t}} \mathcal{E}^\prime (t) + \frac{1}{2 t^{3/2}} \mathcal{E} (t) \Big | 
\le 1.6667 \times 10^{-4}  t^{-1} e^{-3 t}    
\end{equation}
\end{Proposition}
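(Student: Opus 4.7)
The plan is to reduce Proposition \ref{Prop3} to a fixed-point problem for $\mathcal{E}$ in a weighted Banach space whose three components are calibrated to the three bounds \eqref{eqn:23}--\eqref{eqn:25}.

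First, I would substitute the ansatz $F(x) = ax + b + \sqrt{a/(2t)}\,q(t)$ into the Blasius equation \eqref{3}. Using $t(x) = (a/2)(x+b/a)^2$ so that $dt/dx = \sqrt{2at}$ and $ax+b = \sqrt{2at}$, a direct computation gives $F'' = a\sqrt{2at}\,\phi$ with $\phi = q'' - q'/(2t) + q/(2t^2)$, and $FF'' = a^2(2t+q)\phi$, so $F''' + FF'' = 0$ becomes a third-order nonlinear ODE
\begin{equation*}
L[q] = N(q),
\end{equation*}
where $L$ has leading balance $q''' + q''$ at large $t$ and $N(q)$ is quadratic in $q$ and its derivatives. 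The exponentially decaying homogeneous solution of $L$ is precisely $\sigma_1(t) := 2\sqrt{t}\,e^{-t}I_0(t)$; the other two fundamental solutions grow no faster than $\sqrt{t}$. This explains the $I_0$-structure of the leading term of $q_0$ in \eqref{10.1}.

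Next, I would verify that $q_0$ is the two-term truncation of the exponential-asymptotic series $q \sim c\sigma_1 + c^2\sigma_2 + c^3\sigma_3 + \cdots$: writing $q_0 = c\sigma_1 + c^2\sigma_2$, the choice $\sigma_2 = e^{-2t}(2J_0 - I_0 - I_0^2)$ is determined by requiring $L\sigma_2$ to cancel the self-interaction $N(\sigma_1,\sigma_1)$, leaving a residual $R(t) := L[q_0] - N[q_0]$ of pure order $c^3 e^{-3t}$ times a decaying rational function of $t$, which I would bound explicitly and uniformly for $(a,b,c) \in \mathcal{S}$ and $t\ge T \ge 1.99$. Setting $q = q_0 + \mathcal{E}$ and polarizing $N$ to its symmetric bilinear form $N_B$, the equation for $\mathcal{E}$ becomes
\begin{equation*}
L[\mathcal{E}] \;=\; R \;+\; 2\,N_B(q_0, \mathcal{E}) \;+\; N(\mathcal{E}, \mathcal{E}).
\end{equation*}
Using variation of parameters with the three fundamental solutions of $L$, and imposing the selection rule $\mathcal{E}(t)/\sqrt{t}\to 0$ to discard the two non-decaying homogeneous solutions, this inverts to a Volterra-type integral equation $\mathcal{E} = \mathcal{T}[\mathcal{E}]$.

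Then, I would apply the contraction mapping theorem on the space of $C^2$ functions on $[T,\infty)$ with norm
\begin{equation*}
\|\mathcal{E}\|_X = \sup_{t \ge T}\max\!\left\{9\, t^{3/2} e^{3t}\,|\mathcal{E}|,\; 3\, t^{3/2} e^{3t}\,\bigl|\mathcal{E}' - \tfrac{\mathcal{E}}{2t}\bigr|,\; t\, e^{3t}\,\bigl|\sqrt{t}\,\mathcal{E}'' - \tfrac{\mathcal{E}'}{\sqrt{t}} + \tfrac{\mathcal{E}}{2t^{3/2}}\bigr|\right\},
\end{equation*}
chosen so that $\|\mathcal{E}\|_X \le 1.6667\times 10^{-4}$ is equivalent to \eqref{eqn:23}--\eqref{eqn:25}. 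The combinations in the second and third slots are exactly those that arise from the chain rule relating $x$- and $t$-derivatives of the ansatz, which is why they appear in the statement rather than $\mathcal{E}', \mathcal{E}''$ alone. Uniqueness within the decay class follows automatically from the contraction, while standard ODE regularity extends the fixed point to a smooth classical solution of \eqref{3}.

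The main obstacle will be producing Green's-function bounds sharp enough that $\|L^{-1}R\|_X \le 1.6667\times 10^{-4}$ while still leaving a strict contraction margin after the linear perturbation $2L^{-1}N_B(q_0,\cdot)$ is included. This requires sharp monotonicity and pointwise estimates for the $\mathrm{erfc}$-based functions $I_0, J_0$ (so that $q_0$ and its derivatives are controlled explicitly in $c$ and $t$), uniform control of the fundamental solutions of $L$ as $(a,b,c)$ ranges over the full compact set $\mathcal{S}$ rather than a single nominal point, and careful accounting of the cross term so that the contribution of $\sigma_1 \cdot \mathcal{E}$ sits inside the extra $e^{-t}$ safety factor provided by $T\ge 1.99$. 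That threshold is essentially the smallest $t$ at which the constants from these estimates close.
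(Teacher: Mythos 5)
Your proposal follows essentially the same route as the paper: the change of variables to the third-order equation for $q(t)$, identification of $q_0$ as the two-term truncation of the exponential-asymptotic (transseries) expansion in $\xi = c e^{-t}/\sqrt{t}$ built from $I_0$ and $J_0$, decomposition $q = q_0 + \mathcal{E}$, inversion of the linear part with decay imposed at $t=\infty$ to get a nonlinear integral equation, and a contraction argument in a weighted space exploiting the smallness of the residual $R$. The only cosmetic difference is that the paper parametrizes $\mathcal{E}$ through the double integral of an auxiliary function $h$ rather than a general variation-of-parameters inverse, which is why the specific combinations appearing in \eqref{eqn:24}--\eqref{eqn:25} arise; your weighted norm captures the same structure.
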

\begin{Proposition}
\label{Prop4}
There exists a unique triple $(a, b, c) \in \mathcal{S}$ so that the functions 
in the previous two propositions: 
$F_0 (x) + E(x)$ for $x \le \frac{5}{2}$ 
and $ax+b + \sqrt{\frac{a}{2t(x)}} q (t(x)) $ for $x \ge \frac{5}{2}$ 
and their first two derivatives
agree at $x=\frac{5}{2}$.
\end{Proposition}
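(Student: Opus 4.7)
The plan is to reduce Proposition \ref{Prop4} to a finite-dimensional root-finding problem and resolve it by a Newton--Kantorovich / contraction mapping argument on $\mathcal{S}$. Proposition \ref{Prop2} produces a unique smooth extension $F(x)=F_0(x)+E(x)$ on $[0,\tfrac52]$ of the IVP \eqref{3}--\eqref{4}; in particular, the three real numbers $F(\tfrac52),F'(\tfrac52),F''(\tfrac52)$ are fixed (and rigorously bracketed via the bounds \eqref{errBoundsE}). Proposition \ref{Prop3}, for each $(a,b,c)\in\mathcal{S}$, produces a unique right-piece solution $F_r(x;a,b,c):=ax+b+\sqrt{a/(2t(x))}\,q(t(x);a,b,c)$ with $q=q_0+\mathcal{E}$. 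Matching $F,F',F''$ at $x=\tfrac52$ then amounts to solving
\begin{equation*}
\Phi(a,b,c):=\Bigl(F^{(j)}(\tfrac52)-\tfrac{d^{j}}{dx^{j}}F_r(x;a,b,c)\bigl|_{x=5/2}\Bigr)_{j=0,1,2}=0,
\end{equation*}
a system of three equations in three unknowns on $\mathcal{S}\subset\RR^{3}$.

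First I would verify that $\Phi(a_0,b_0,c_0)$ is small. Because $(a_0,b_0,c_0)$ has been chosen by design so that $F_0$ is approximately $C^2$-continuous at $\tfrac52$, the contribution from $q_0$ at $t_m$ balances the polynomial piece up to residual errors. Combining \eqref{errBoundsE} (for the left values) with the explicit form \eqref{10.1} of $q_0$ and the $\mathcal{E}$-bounds \eqref{eqn:23}--\eqref{eqn:25} (evaluated at $t=t_m\approx 1.999$, where $e^{-3t_m}$ is of order $10^{-3}$) gives a rigorous bound of the form $\|\Phi(a_0,b_0,c_0)\|\le\varepsilon_0$ with $\varepsilon_0$ small enough for the next step.

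Next, compute the Jacobian $J_0:=D\Phi(a_0,b_0,c_0)$. The entries are (up to $\mathcal{E}$-corrections, whose $(a,b,c)$-derivatives one controls by redoing the contraction in Proposition \ref{Prop3} with parameter dependence) explicit functions of $a_0,b_0,c_0,t_m$ obtained from differentiating $ax+b+\sqrt{a/(2t)}\,q_0(t)$ in $(a,b,c)$ and in $x$; these can be evaluated with rigorous interval arithmetic. The key quantitative input is a verified upper bound $\|J_0^{-1}\|\le M$. Then the Newton map $\mathcal{T}(a,b,c):=(a,b,c)-J_0^{-1}\Phi(a,b,c)$ is shown to send $\mathcal{S}$ into itself and to be a contraction there, using a uniform Lipschitz bound on $D\Phi$ on $\mathcal{S}$ (again, deriving parameter-derivatives of $q$ from Proposition \ref{Prop3}). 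The unique fixed point is the unique $(a,b,c)\in\mathcal{S}$ yielding $\Phi=0$.

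The main obstacle is the quantitative end of this argument: one must show $M\varepsilon_0$ and $M L\rho_0$ (where $L$ bounds the Lipschitz constant of $D\Phi$ on $\mathcal{S}$) are both small enough that the Newton iterates stay within the prescribed ball of radius $\rho_0=5\times10^{-5}$, while $J_0$ is simultaneously shown to be nonsingular with a usable bound on its inverse. A secondary technical point is obtaining the required regularity of $\mathcal{E}(t;a,b,c)$ in the parameters: one must revisit the functional fixed-point used to construct $\mathcal{E}$ in Proposition \ref{Prop3} and differentiate it in $(a,b,c)$, then check that the derivative equations admit the same kind of exponentially small estimates as \eqref{eqn:23}--\eqref{eqn:25}, so that the contribution of $\mathcal{E}$ to $\Phi$ and its Jacobian is negligible compared to the $q_0$-part.
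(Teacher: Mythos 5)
Your proposal is sound and lands on the same core reduction as the paper: the left piece is fixed by Proposition \ref{Prop2} (so $F(\tfrac52),F'(\tfrac52),F''(\tfrac52)$ are known constants up to the rigorously bracketed error $E$), the right piece depends on $(a,b,c)$ through $q_0$ and $\mathcal{E}$, and the matching becomes a $3\times3$ nonlinear system solved by a quantitative contraction on the ball $\mathcal{S}$. Where you differ is in how the contraction map is built. You precondition with the inverse Jacobian, i.e.\ a Newton--Kantorovich map $\mathcal{T}=\mathrm{id}-J_0^{-1}\Phi$, which forces you to establish invertibility of $J_0$ with a verified bound on $\|J_0^{-1}\|$ and a Lipschitz bound on $D\Phi$ over $\mathcal{S}$. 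The paper instead rearranges each matching equation algebraically to isolate its ``own'' unknown, obtaining $\mathbf{A}=\mathbf{N}[\mathbf{A}]$ in the scaled variables $\mathbf{A}=(a,\tfrac{b}{2},\tfrac{c}{2})$ (equations \eqref{16}--\eqref{17} and Definition \ref{defn:5}), and then applies Lemma \ref{lemMatch}: it suffices that $\sup_{\mathcal{S}_A}\|\mathbf{J}\|_2\le\alpha<1$ (verified with $\alpha\le0.764$) and $\|\mathbf{A}_0-\mathbf{N}[\mathbf{A}_0]\|_2\le(1-\alpha)\rho_0$ (verified as $1.16\times10^{-5}$). That route avoids any matrix inversion and only needs first-derivative bounds of $\mathbf{N}$, at the price of depending on a felicitous rearrangement and on the weighted norm to push the contraction constant below $1$; your Newton route is more systematic and would give a much smaller contraction factor near the root, but carries a heavier certification burden ($\|J_0^{-1}\|$ plus second-order control of $\Phi$). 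Both approaches share the one genuine technical obligation you correctly flag: the parameter-dependence of $\mathcal{E}$ (equivalently $h(\cdot\,;c)$) must be controlled by revisiting the fixed-point construction of Proposition \ref{Prop3}, since these terms enter the matching functions and their Jacobian; the paper defers exactly this, together with the interval-arithmetic verification of the two inequalities, to \cite{BlasiusCT}.
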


\noindent{\bf The proof} of Theorem \ref{Thm1} follows from Propositions 
\ref{Prop2}-\ref{Prop4} in the following manner:
Proposition \ref{Prop2} implies that $F (x) =F_0 (x) + E(x)$ satisfies 
\eqref{3}-\eqref{4} for $x \in I$; we note that $F_0 (0)=0 = F_0^\prime (0)$ 
and $F_0^{\prime\prime} (0)=1$. 

Proposition \ref{Prop3} implies $ F(x) = a x + b + \sqrt{\frac{a}{2 t(x)}} 
\left [ q_0 (t(x) ) + \mathcal{E} (t (x)) \right ]$
satisfies \eqref{3} in a range of $x$ that includes $[\frac{5}{2}, \infty)$
when $(a, b, c) \in \mathcal{S}$. Further, Proposition \ref{Prop4} ensures
that this is the same solution of the ODE \eqref{3} as the one in Proposition
\ref{Prop2}.
Identifying $F_0 (x) $ and $E(x)$ in 
Theorem \ref{Thm1} in this range of $x$ with 
$ a x + b + \sqrt{\frac{a}{2 t(x)}} q_0 (t(x))$ and  
$\sqrt{\frac{a}{2 t(x)}} \mathcal{E} (t(x))$, respectively,
and relating $x$-derivatives to $t$ derivatives,
the error bounds for $E$, $E^\prime$ and $E^{\prime \prime}$ follow from the ones
given for $\mathcal{E}$ in Proposition \ref{Prop3} for
$(a,b,c) \in \mathcal{S}$. We discuss these propositions in later sections. 

\section{Solution in the interval $ I = [0, \frac{5}{2}]$ for $(\alpha, \gamma) = (0,0)$
and proof of Proposition \ref{Prop2}}\label{S1}

The quasi-solution $F_0$ in the compact set $I$ is obtained simply by projecting
a numerical
solution 
on the subspace spanned by the first few Chebyshev polynomials 
$\left \{ T_j \left (\frac{4}{5} x - 1 \right ) \right \}_{j=0}^{N}$. 
To 
avoid estimating derivatives of an 
approximation, which are not well-controlled,  we 
project instead the approximate 
third derivative $F^{\prime\prime\prime}=-F^{\prime\prime}F$ on 
the interval
$I=[0, \frac{5}{2}]$. The rigorous control of the errors of the {\em integrals} 
of $F'''$ is a much simpler task.
For a given polynomial degree, a 
Chebyshev polynomial approximation of a function is known to be, 
typically, close to the 
most accurate polynomial approximation, in the sense of $L^\infty$.
A power series is less efficient since it is constrained
by complex plane behavior.

We seek to control the error term $E$ in (\ref{5}) by first estimating the remainder
\begin{equation}
\label{12.0}
R(x) = F_0^{\prime \prime\prime} (x) + F_0 (x) F_0^{\prime \prime} (x),
\end{equation} 
which will be shown to be small ($ \le 0.673 \times 10^{-6}$). 
Then, we invert the principal part
of the linear part of the equation for the error term $E$ by using initial
conditions to obtain a nonlinear integral equation. 
The smallness of $R$ and careful bounds on the resolvent $\mathcal{L}^{-1}$ help prove
Proposition \ref{Prop2}.

\subsection{Estimating size of remainder $R (x)$ for $ x\in I $}\label{R(x)}
Since $P$ is a polynomial of degree twelve,
$R(x)$ is a polynomial of degree 30. 
We estimate $R$ in $I$ in the following manner.
We break
up the interval into subintervals 
$\left \{ [x_{j-1}, x_{j}] \right \}_{j=1}^{14}$
with $x_0=0$ and $x_{14} = \frac{5}{2}$, while 
$\left \{x_j \right \}_{j=1}^{13}$ is given by
$$ \left \{ 0.0625, 0.125, 0.25, 0.375, 0.50, 0.75, 1.0, x_c, 1.5, 1.75, 2.0, 2.25, 2.40 \right \}$$
where $x_c = 1.322040$.\footnote{As described elsewhere\cite{Blasius}, it is convenient to choose one
of the subdivision points $x_c$ to be approximately, to the number of digits quoted,
the value of $x$ where 
$F_0^{\prime \prime} (x) - 2 F_0^\prime (x) +1$ changes sign.}
The intervals were chosen based 
on how rapidly the polynomial $R(x)$ varies locally.

We re-expand $R(x)$ as polynomial in the scaled variable   
$\tau$, where $x=\frac{1}{2} \left (x_j
+ x_{j-1} \right ) + \frac{1}{2} \left ( x_j-x_{j-1} \right ) \tau$.  
and write
$$ R(x) = P_3^{(j)}(\tau) + \sum_{k=4}^{30} a_k^{(j)} \tau^k
$$
and determine the maximum $M_j$ and minimum $m_j$ 
of the third degree polynomial $P_{3}^{(j)} (t)$
for $\tau \in [-1,1]$ (using simple calculus). 
We bound the contribution of the remaining terms:
$$ E_{R}^{(j)} \equiv \sum_{k=4}^{30} \Big | a_k^{(j)} \Big | $$  
It follows that in the $j$-subinterval we have
$$ m_j - E_R^{(j)} \le R(x) 
\le M_j + E_R^{(j)} $$
The maximum and minimum over any union of subintervals
is found simply taking $\min$ and $\max$ of $m_j - E_{R}^{(j)}$ and $M_j + E_{R}^{(j)}$ 
over the the indices $j$ for 
subintervals involved.   
This elementary though tedious calculation\footnote{The maximum and minimum found
through analysis described here is found to be consistent with a 
numerical plot of the graph of $R(x)$, as must be the case. 
The calculations can be conveniently done with a computer algebra program, as they only involve operations with rational numbers.} yields
\begin{multline}
\label{12.1}
-3.22 \times 10^{-7} \le R (x) \le 2.505 
\times 10^{-7} ~{\rm for} ~x \in [0, x_c ] \\
 4.6 \times 10^{-8}  \le  R (x) \le 4.06 \times 10^{-7}  
~{\rm for} ~x \in [x_c, 2.0] \\
 2.78 
\times 10^{-7}  
\le  R(x)  \le 6.73 \times 10^{-7}  ~{\rm for} ~x \in [2.0, 2.5]  
\end{multline}  
We note that the remainder is at most
$6.73 \times 10^{-7}$ in absolute value in the interval $I$.
In the same way, we find bounds for 
for the polynomials $F_0 (x)$, $F_0^\prime$ and $F_0^{\prime \prime} (x)$.
For $x \in \left [ 0 , \frac{1}{8} \right ]$,
\begin{multline}
\label{12.1.0}
-5 \times 10^{-10} \le F_0 (x) \le 0.008
\ ,  -8 \times 10^{-12} \le F_0^{\prime} (x) \le  
0.13   \ , \\  
0.99 \le F_0^{\prime \prime} (x)  \le 1 + 2 \times 10^{-9} 
\end{multline}
while for $x \in \left [ \frac{1}{8}, \frac{5}{2} \right ]$,
\begin{multline}
\label{12.1.1}
0.03 \le F_0 (x) \le 2.59 \ ,     0.12  \le F_0^\prime (x)  \le 1.7 \ ,
0.09 \le  F_0^{\prime \prime} (x) \le  1
\end{multline}

A less-unwieldly strategy for residual error estimation is to find $A_j$ so that
\begin{equation}
\label{15}
R (x)= \sum_{j=0}^{30} A_j T_j \left (\frac{4 x}{5} -1 \right )      
\end{equation}
Since for $y \in [-1,1]$, $T_j (y) = \cos \left ( j \cos^{-1} y \right )$ is less than 1 in absolute value
\begin{equation}
\label{15.1}
\| R \|_{\infty, I} \le \sum_{j=0}^{30} |A_j | \le 9.74 \times 10^{-7}
\end{equation}
Projecting $R$ instead to Chebyshev polynomials
in each of the sub-intervals $[0, x_c]$, $[x_c, 2]$ and $[2, 2.5]$ gives somewhat better bounds.
In both cases, the bounds are not as sharp as ones estimated through local Taylor series expansion.
Nonetheless, this method is simpler and more easily adapted to multi-variables.

\subsection{Error estimate on a sub-interval $[x_l, x_r ]\subset \mathcal{I}$}

Consider the decomposition
\begin{equation}
\label{12.1.0.0}
F(x) = F_0 (x) + E(x) 
\end{equation}
We seek to find error estimates for $E(x)$ and its first two derivatives for
$x \in I$.
On $[x_l, x_r] \subset I$, where $E(x_l)$, $E^\prime (x_l)$ and $E^{\prime \prime} (x_l)$
are considered known, $E$ satisfies:
\begin{equation}
\label{12.2}
\mathcal{L} [ E ] :=E^{\prime\prime \prime} (x) + F_0 (x) E^{\prime \prime} (x) + F_0^{\prime \prime} (x) 
E(x) = 
-E (x) E^{\prime \prime} (x) - R(x) 
\end{equation}
Using a variation of parameter approach, where $\left \{ \Phi_j \right \}_{j=1}^{3}$ are
fundamental solutions to $L \Phi = 0$, we may invert the operator $\mathcal{L}$ by using
the boundary condition at $x=0$ to obtain 
an integral equation in the form:
\begin{equation}
\label{12.3}
E^{\prime \prime} (x) = \sum_{j=1}^3 E^{(j-1)} (x_l) \Phi_j^{\prime \prime} (x)   
-\mathcal{G} \left [ R \right ] (x) +
\mathcal{G} \left [ E E^{\prime \prime} \right ] (x) 
=: \mathcal{N} \left [ E^{\prime \prime} \right ] (x)     
\end{equation}
and where  $E$ is given in terms of $E^{\prime \prime}$: 
\begin{equation}
\label{12.8.0}
E (x) =  E(x_l) + (x-x_l) E^{\prime} (x_l) + \int_{x_l}^x (x-t) E^{\prime \prime} (t) dt 
\end{equation}
Note that \eqref{12.8.0} allows control of $\| . \|_\infty$ (sup)-norm of
$E$ and $E^{\prime}$ in terms of $E^{\prime \prime}$. Error estimates follow by
showing that integral equation
(\ref{12.2}) written abstractly $E^{\prime \prime} = \mathcal{N} [E^{\prime \prime} ]$ 
has a unique solution in the space of continuous
functions in a small ball in the $\|. \|_\infty$ norm by showing that $\mathcal{N}$ is
contractive (see \cite{BlasiusCT} for details).
This is possible
without explicit knowledge of $\left \{ \Phi_j \right \}_{j=1}^3$ or the resolvent
operator $\mathcal{G}$, provided that the bounds are not too large.
In the following sub-section we detail how energy methods can be used for that purpose.

\subsection{Green's function estimate}
\label{subsec:Green}

Consider now the problem of solving the linear generally inhomogeneous equation
\begin{equation}
\label{12.0.0}
\mathcal{L} [ \phi ] :=\phi^{\prime\prime \prime} (x) + F_0 (x) \phi^{\prime \prime} (x) + F_0^{\prime \prime} (x)  \phi (x) = r(x)
\end{equation}
over a typical subinterval $[x_l, x_r] \subset I$,
with initial conditions $\phi (x_l)$, $\phi^\prime (x_l)$ and $\phi^{\prime \prime} (x_l)$ known.
The solution of this inhomogeneous equation is 
given by the standard variation of parameter formula:
\begin{equation}
\label{12.0.1}
\phi (x) = \sum_{j=1}^3 \phi^{(j-1)} (x_l) \Phi_j (x)  + \sum_{j=1}^3 \Phi_j (x) \int_{x_l}^x \Psi_j (t) r(t) dt 
\end{equation}
where $\left \{ \Phi_j \right \}_{j=1}^3 $ form a 
fundamental set of solutions to $\mathcal{L} \phi=0$ 
and $\left \{\Psi_j (x) \right \}_{j=1}^3$ are elements 
of the inverse of the fundamental matrix
constructed from the $\Phi_j$ and their derivatives. The 
precise expressions are unimportant in the ensuing: 
we only need their smoothness in  $x$.
It also follows from the  properties of
$\Phi_j$ and $\Psi_j$ \footnote{In particular, 
$\sum_{j=1}^3 \Phi_j (x) \Psi_j (x) =0$, 
$\sum_{j=1}^3 \Phi_j^\prime (x) \Psi_j (x)=0$}
that
\begin{equation}
\label{12.3.1}
\phi^{\prime \prime} (x) = 
\sum_{j=1}^3 \phi^{(j-1)} (x_l) \Phi_j^{\prime \prime} (x) 
+ \sum_{j=1}^3 \Phi_j^{\prime \prime} (x) \int_{x_l}^x \Psi_j (t) r(t) dt \ , 
\end{equation}
It is useful to write (\ref{12.3.1}) in the following abstract form
\begin{equation}
\label{12.4}
\phi^{\prime \prime} (x) = \sum_{j=1}^3 \phi^{(j-1)} (x_l) \Phi_j^{\prime \prime} (x) + \mathcal{G} \left [ r \right ] (x)      
\end{equation}
where from general properties of fundamental
matrix and its inverse for the linear ODEs with smooth 
(in this case polynomial) coefficients 
$\mathcal{G}$ is a bounded linear operator 
on $C([x_l, x_r])$; 
denote its norm by $M$,
\begin{equation}
  \label{eq:defM}
  M=\|\mathcal{G}\|
\end{equation}
 Then, on the interval $[x_l, x_r]$ we have,
\begin{equation}
\label{12.5}
\| \phi^{\prime \prime} \|_\infty \le  \sum_{j=1}^\infty M_j \Big | \phi^{(j-1)} (x_l) \Big | 
+ M \| r \|_{\infty};\ M_j
= \sup_{x \in [x_l,x_r]} 
\Big | \Phi_j^{\prime \prime} (x) \Big | 
\end{equation}
We will outline how estimates of
$M_j$ for $j=1..3$ and
$M$ may be obtained indirectly, 
using ``energy'' bounds. Because of 
linearity of the problem, for the purposes
of determining these bounds, it is useful to separately 
consider the cases (i)--(iii), when
$r =0$, $\phi^{(k-1)} (x_l) =0$ for $1 \le k \ne j \le 3$, 
$\phi^{(j-1)} (x_l) = 1$  respectively, and, finally, (iv) when
$\phi^{(k-1)} (x_l) =0$ for $k =1,..3$ and $r(t) \ne 0$. 
For all cases (i)-(iv), 
we return to the ODE
\begin{equation}
\label{12.13}
\phi^{\prime \prime \prime} + F_0 \phi^{\prime \prime} + F_0^{\prime \prime} \phi = r 
\end{equation}
Multiplying  by $2 \phi^{\prime \prime}$ and integration gives
\begin{equation}
\label{12.14}
\left ( \phi^{\prime \prime} (x) \right )^2 = \left ( \phi^{\prime \prime} (x_l) \right )^2 
- \int_{x_l}^x \left \{ 2 F_0 (y) \left ( \phi^{\prime \prime} (y) \right )^2  
+
2 F_0^{\prime \prime} (y) \phi^{\prime \prime} (y) \phi (y) - 
2 \phi^{\prime \prime} (y) r(y) \right \} dy,
\end{equation}     
Further, for given  $\phi (x_l)$ and $\phi^\prime (x_l)$,
$\phi(x)$ is determined from $\phi^{\prime \prime} (x)$ from
\begin{equation}
\label{12.15}
{\tilde \phi} (x) := 
\phi(x) - \phi (x_l) - (x-x_l) \phi^\prime (x_l) = \int_{x_l}^x (x-y) \phi^{\prime \prime} (y) dy
\end{equation}
Using (\ref{12.15}) in (\ref{12.14}), it follows that 
\begin{multline}
\label{12.14.1}
\left ( \phi^{\prime \prime} (x) \right )^2 = \left ( \phi^{\prime \prime} (x_l) \right )^2 
- \int_{x_l}^x 2 F_0^{\prime \prime} (y) 
\left [ \phi(x_l) + (y-x_l) \phi^{\prime} (x_l) \right ] \phi^{\prime \prime} (y) dy \\
- \int_{x_l}^x \left \{ 2 F_0 (y) \left ( \phi^{\prime \prime} (y) \right )^2  
+
2 F_0^{\prime \prime} (y) \phi^{\prime \prime} (y) {\tilde \phi} (y) - 
2 \phi^{\prime \prime} (y) r(y) \right \} dy,
\end{multline}     
Using Cauchy Schwartz inequalities, the relation between $\phi$ and $\phi^{\prime \prime}$ and
Gronwall's inequality, it is not difficult to prove by considering separately cases (i)-(iv) that
\begin{equation}
\label{12.14.9}
M_1 \le 
\left ( F_0^\prime (x_r) - F_0^{\prime} (x_l) \right )^{1/2}
\exp \left [ \frac{1}{2} \int_{x_l}^{x_r} Q_1 (y) dy \right ] \ ,
\end{equation}
\begin{equation}
\label{12.14.12.0}
M_2 \le 
\left ( \int_{x_l}^{x_r} (y-x_l)^2 F_0^{\prime \prime} (y) dy\right )^{1/2}
\exp \left [ \frac{1}{2} \int_{x_l}^{x_r} Q_1 (y) dy \right ] \ ,
\end{equation}
\begin{equation}
\label{12.18.0}
M_3 \le 
\exp \left [ \frac{1}{2} \int_{x_l}^{x_r} Q_2 (y) dy \right ] \ , 
\end{equation}
\begin{equation}
\label{12.14.18}
M   
\le \left ( x_r - x_l \right )^{1/2}
\exp \left [ \frac{1}{2} \int_{x_l}^{x_r} Q(y) dy \right ] \ ,  
\end{equation}
where
\begin{multline}
\label{12.14.6.0}
Q_1 (x) = F_0^{\prime \prime} (x) \left ( 2 + \frac{(x-x_l)^4}{4} \right ) - 2 F_0 (x)   
~~{\rm if}~~2 F_0^{\prime \prime} (x) - 2 F_0 (x) > 0   \\
\text{and} 
~ Q_1 (x) = \frac{(x-x_l)^4}{4} F_0^{\prime \prime} (x) ~~{\rm if}~~2 F_0^{\prime \prime} (x) - 2 F_0 (x) \le 0   
\end{multline}
\begin{multline}
\label{12.14.0}
Q_2 (x) = \left ( 1 + \frac{(x-x_l)^4}{4} \right ) F_0^{\prime \prime} (x) - 2 F_0 (x)  \ , 
~~{\rm if} ~~F_0^{\prime \prime} (x) - 2 F_0 (x) > 0  \\
\text{and} \ Q_2 (x) = \frac{(x-x_l)^4}{4} F_0^{\prime \prime} (x) \ , 
~~{\rm if} ~~F_0^{\prime \prime} (x) - 2 F_0 (x) \le 0  \ ,
\end{multline}
\begin{multline}
\label{12.14.10.0.0}
Q(x) = F_0^{\prime \prime} (x) - 2 F_0 (x) + 1 + \frac{(x-x_l)^4}{4} F_0^{\prime \prime} (x) ~~{\rm if} ~F_0^{\prime \prime} - 2 F_0 +1 \ge 0 
\\
Q(x) = \frac{(x-x_l)^4}{4} F_0^{\prime \prime} (x) ~~{\rm if} ~F_0^{\prime \prime} - 2 F_0 +1 < 0 
\end{multline}
It was possible\cite{BlasiusCT} to estimate $ M $, $M_1$, $M_2$, $M_3$ in three sub-intervals 
$[0, x_c]$, $[x_c, 2]$ and $[2, 2.5]$ and use them to get small error bounds
for $E$, $E^{\prime}$ and $E^{\prime \prime}$ to complete the proof of Proposition
\ref{Prop2}. 

\section{Solution in $t \ge T \ge 1.99 $ for $|c| < \frac{1}{4} $, $a > 0$
and proof of Proposition \ref{Prop3}}
\label{S2}

The construction of quasi-solution $F_0$ for $x \in \left [ \frac{5}{2}, \infty \right ]$ 
relies on precise large $x$ asymptotics, which as it turns out, gives a desirably accurate solution representation
in the entire interval. 
For the Blasius solution, it is known that any solution with $\lim_{x \rightarrow \infty} F^\prime (x)   
= a > 0$ must have the representation 
\begin{equation} 
F(x) = a x+ b + G(x) 
\end{equation}
where $G(x)$ is exponentially small in $x$ for large $x$. Indeed, through change of
variable $t=t(x)$ given by  
(\ref{10.2}) and $G = \sqrt{\frac{a}{2 t}} q(t)$, $q$ satisfies
\begin{equation}
\label{14.2}
{\frac {d^{3}}{d{t}^{3}}}q + \left ( 1 +  
\frac {q}{2t} \right )  {\frac {d^{2}}{d{t}^{2}}}q
+ \left( -\frac {1}{2t}+\frac{3}{4 t^2} -
\frac {q}{4 t^{2}} \right) \frac {dq}{dt} 
+ \left ( \frac{1}{2 t^2} -\frac{3}{4 t^3} \right ) q  
 +\frac{q^2}{4 t^3} = 0 
\end{equation}
and from a general theory\cite{Duke}\footnote{Though the non-degeneracy condition 
stated in \cite{Duke} does not hold, a small modification leads to the same result}
it may be deduced
that small solutions $q$ must have the convergent series representation
\begin{equation}  
\label{14.3}
q (t) = \sum_{n=1}^\infty \xi^n Q_n (t)  \ , {\rm where}~ \xi= \frac{c e^{-t}}{\sqrt{t}}  
\end{equation}
where the equations for $Q_n$ may be deduced by plugging in (\ref{14.3}) into (\ref{14.2}) and equating
different powers of $\xi$. With appropriate matching
at $\infty$, one obtains $Q_1 (t) = 2 t I_0 (t)$ and
$Q_2 (t) = - t I_0 - t I_0^2 + 2 t J_0$, where
\begin{equation}
\label{14.4}
I_0 = 1 - \sqrt{\pi t} e^{t} {\rm erfc} (\sqrt{t}) 
\end{equation} 
\begin{equation}
\label{14.5}
J_0 = 1 - \sqrt{2 \pi t} e^{2 t} {\rm erfc} (\sqrt{2 t}) 
\end{equation}
The two term truncation of (\ref{14.3}) proved adequate to determine an accurate quasi-solution 
in an $x$-domain that corresponds to $t \ge 1.99$ if $|c| \le \frac{1}{4}$ to within the quoted
accuracy. Note that the solution is only complete 
after determining $(a, b, c)$ through 
matching of $F$, $F^\prime$ and $F^{\prime\prime}$ at $x=\frac{5}{2}$.
Since $(a, b, c)$ only needs to be restricted to some 
small neighborhood of $(a_0, b_0, c_0)$ to accomplish matching (see Proposition \ref{Prop4}), 
the restriction 
$t \ge 1.99$ is seen to include $x \ge \frac{5}{2}$. Furthermore, the restriction $|c| \le \frac{1}{4}$
in Proposition \ref{Prop4} is appropriate for the quoted
error estimates in $x \ge \frac{5}{2}$ in Theorem \ref{Thm1}.

We decompose 
\begin{equation}
\label{14.3.1} 
q(t) = q_0 (t) + \mathcal{E} (t) \ ,
\end{equation}
where 
\begin{equation}
\label{14.3.0}
q_0 (t) = \frac{c e^{-t}}{\sqrt{t}} Q_1 (t) + \frac{c^2 e^{-2t}}{t} Q_2 (t) \ , 
\end{equation}
where 
\begin{equation}
\label{14.5b}
Q_1 (t)  = 2 t I_0  (t) \ , ~{\rm where} ~I_0 (t) := 1 - \sqrt{\pi t} e^{t} 
{\rm erfc} (\sqrt{t}) = \frac{1}{2} \int_0^\infty \frac{e^{-st}}{(1+s)^{3/2}} ds \ ,
\end{equation}
\begin{equation} 
\label{14.5.0}
Q_2 (t) =  
- t I_0 - t I_0^2 + 2 t J_0 \ , ~{\rm where} ~ J_0 (t) 
:= 1 - \sqrt{2 \pi t} e^{2 t} {\rm erfc} (\sqrt{2 t}) = \frac{1}{4} \int_0^\infty 
\frac{e^{-st}}{(1+s/2)^{3/2}} ds  \ .
\end{equation}
We obtain a nonlinear integral equation for $h$, which is related to $\mathcal{E}$ as follows:
\begin{equation}
\label{14.3.0.0}
\mathcal{E} (t) = \sqrt{t}
\int_{\infty}^t \frac{ds}{\sqrt{s}} \int_{\infty}^s \frac{e^{-\tau}}{\sqrt{\tau}} h(\tau) d\tau
\end{equation}
A contraction mapping argument in a small ball is possible by exploiting the smallness of the residual  
$R=R (t)$ given by  
\begin{equation}
\label{14.5R}
R = 
{\frac {d^{3}}{d{t}^{3}}}q_0 + \left ( 1 +  
\frac {q_0}{2t} \right )  {\frac {d^{2}}{d{t}^{2}}}q_0
+ \left( -\frac {1}{2t}+\frac{3}{4 t^2} -
\frac {q_0}{4 t^{2}} \right) \frac {dq_0}{dt} 
+ \left ( \frac{1}{2 t^2} -\frac{3}{4 t^3} \right ) q_0  
 +\frac{q_0^2}{4 t^3} 
\end{equation}
This leads to proof of Proposition \ref{Prop3} (see \cite{BlasiusCT} for details).

\section{Matching for $(\alpha, \gamma) = (0,0)$ and proof of Proposition \ref{Prop4}}
\label{S3}

In order for the two representations \eqref{12.1.0.0} and $F(x) = a x+b + 
\sqrt{\frac{a}{2t(x)}} \left ( q_0 (t(x)) + \mathcal{E} (t(x)) \right ) $ to 
to coincide at $x = \frac{5}{2}$ 
we match $F$ and its two derivatives; from (\ref{14.3.0}), (\ref{14.3.1}) and (\ref{14.3.0.0}) we get
\begin{equation}
\label{16}
a = F^\prime (\tfrac{5}{2}) 
- a \left ( q_0^\prime (t_m; c) - \frac{q_0 (t_m; c)}{2t_m} \right ) -  
a \int_{\infty}^{t_m} \frac{e^{-\tau}}{\sqrt{\tau}} h (\tau; c) d\tau 
=:N_1 (a, b, c)
\end{equation}
\begin{equation}
\label{15b}
b = F(\tfrac{5}{2}) - \frac{5}{2} N_1 - \sqrt{\frac{a}{2 t_m}} q_0 (t_m; c)   
- \sqrt{\frac{a}{2}} 
\int_{\infty}^{t_m} \tau^{-1/2} \int_{\infty}^\tau s^{-1/2} e^{-s} h(s; c) ds 
:=N_2 (a, b, c)
\end{equation}
\begin{equation}
\label{17}
c =  
\frac{1}{\sqrt{2} a^{3/2}}
\left [ V (t_m; c) + \frac{1}{c} h (t_m; c) \right ]^{-1}  
e^{t_m} F^{\prime \prime} (\tfrac{5}{2}) =: N_3 (a, b, c) 
\end{equation}      
\begin{Definition}
\label{defn:5}
We define ${\bf A} = \left ( a , \frac{b}{2}, \frac{c}{2} \right )$, ${\bf A}_0 = \left ( a_0, \frac{b_0}{2}, \frac{c_0}{2} \right )$
and ${\bf N} ({\bf A} ) = \left ( N_1, \frac{1}{2} N_2, 
\frac{1}{2} N_3 \right )$. Define also
$$\mathcal{S}_A := \left \{ 
\| {\bf A} - {\bf A}_0 \|_2 \le \rho_0:=5 \times 10^{-5} \right \}$$
where $\|. \|_2$ is the Euclidean norm and let
\begin{equation}
\label{17.3.0}
{\bf J} = {\begin{pmatrix} \partial_a N_1 & 2 \partial_b N_1  & 2 \partial_c N_1 \cr
         \frac{1}{2} \partial_a N_2 &  \partial_b N_2 & \partial_c N_2 \cr
         \frac{1}{2} \partial_a N_3 &  \partial_b N_3 & \partial_c N_3 
\end{pmatrix}}
\end{equation}
\end{Definition}
\begin{Note}{\rm }
  We see that ${\bf A} \in \mathcal{S}_A $ implies $(a, b, c) \in \mathcal{S}$.  
The system of equations (\ref{16})-(\ref{17}) is written as
\begin{equation}
\label{15.0.0}
{\bf A} = \mathbf{N} [{\bf A} ]
\end{equation}
We define
${\bf J} = \frac{\partial{\bf N}}{\partial {\bf A}}$ to be
the Jacobian and $\|{\bf J} \|_2$ denotes the
$l^2 $ (Euclidean) norm of the matrix. We note that
\begin{multline}
\label{17.3.0.0}
\| J \|^2_2 = 
\left ( \partial_a N_1 \right )^2 + 4 \left (\partial_b N_1\right )^2  + 4 \left ( \partial_c N_1 \right )^2
+\frac{1}{4} \left ( \partial_a N_2 \right )^2 
+\left (\partial_b N_2\right )^2  + \left ( \partial_c N_2 \right )^2
\\
+\frac{1}{4} \left ( \partial_a N_3 \right )^2 
+\left (\partial_b N_3\right )^2  + \left ( \partial_c N_3 \right )^2
\end{multline}
\end{Note}

\begin{Lemma}
\label{lemMatch}{\rm 
The inequalities
\begin{equation} \label{162}
\| {\bf A}_0 - {\bf N} [{\bf A}_0] \|_2 \le (1-\alpha) \rho_0
\end{equation}
\begin{equation}\label{163}
\sup_{{\bf A} \in \mathcal{S}_A} \| {\bf J} \|_2 \le \alpha < 1
\end{equation}
for some $\alpha \in (0, 1)$ imply that
${\bf A} = \mathbf{N} [ {\bf A} ]$
has a unique solution for ${\bf A} \in \mathcal{S}_A$.}
\end{Lemma}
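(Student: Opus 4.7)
The plan is to recognize this as a direct application of the Banach fixed point theorem on the closed ball $\mathcal{S}_A \subset \mathbb{R}^3$, with the key mechanism being the mean value inequality applied to the vector-valued map $\mathbf{N}$ using the operator norm bound on its Jacobian $\mathbf{J}$. I would first verify that the rescaling built into the definitions of $\mathbf{A}=(a,b/2,c/2)$ and $\mathbf{N}=(N_1,N_2/2,N_3/2)$ is exactly what makes the Jacobian in \eqref{17.3.0} the correct Frechet derivative of $\mathbf{N}$ with respect to $\mathbf{A}$ in the Euclidean norm, so that $\|\mathbf{J}\|_2$ in \eqref{163} is the Lipschitz constant of $\mathbf{N}$ in that norm.

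The main argument proceeds in two steps. First, I would establish the Lipschitz estimate
\[
\|\mathbf{N}[\mathbf{A}]-\mathbf{N}[\mathbf{A}']\|_2 \le \alpha \|\mathbf{A}-\mathbf{A}'\|_2 \qquad \text{for all } \mathbf{A},\mathbf{A}' \in \mathcal{S}_A,
\]
by writing
\[
\mathbf{N}[\mathbf{A}]-\mathbf{N}[\mathbf{A}'] = \int_0^1 \mathbf{J}\bigl(\mathbf{A}' + s(\mathbf{A}-\mathbf{A}')\bigr)\,(\mathbf{A}-\mathbf{A}')\,ds,
\]
noting that the segment from $\mathbf{A}'$ to $\mathbf{A}$ lies in the convex set $\mathcal{S}_A$, and applying \eqref{163} inside the integral together with the submultiplicativity of the operator norm. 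This step implicitly requires $\mathbf{N}$ to be $C^1$ on an open neighborhood of $\mathcal{S}_A$, which should follow from the smooth dependence of $q_0$, the residual $R$, and the solution $h$ of the contraction problem in Proposition \ref{Prop3} on the parameters $(a,b,c)$; I would cite that smooth dependence rather than re-derive it.

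Second, I would show that $\mathbf{N}$ maps $\mathcal{S}_A$ into itself. For $\mathbf{A}\in\mathcal{S}_A$, the triangle inequality combined with the Lipschitz bound and hypothesis \eqref{162} gives
\[
\|\mathbf{N}[\mathbf{A}]-\mathbf{A}_0\|_2 \le \|\mathbf{N}[\mathbf{A}]-\mathbf{N}[\mathbf{A}_0]\|_2 + \|\mathbf{N}[\mathbf{A}_0]-\mathbf{A}_0\|_2 \le \alpha\rho_0 + (1-\alpha)\rho_0 = \rho_0,
\]
so $\mathbf{N}[\mathbf{A}]\in\mathcal{S}_A$. Combining these two facts, $\mathbf{N}:\mathcal{S}_A\to\mathcal{S}_A$ is a strict contraction with constant $\alpha<1$ on a complete metric space (a closed ball in $\mathbb{R}^3$ with the Euclidean metric), so Banach's fixed point theorem supplies a unique $\mathbf{A}\in\mathcal{S}_A$ with $\mathbf{A}=\mathbf{N}[\mathbf{A}]$.

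The main obstacle is not the fixed-point argument itself, which is standard, but rather making sure the integrand $\mathbf{J}(\mathbf{A}'+s(\mathbf{A}-\mathbf{A}'))$ is well-defined and measurable along every chord in $\mathcal{S}_A$; this in turn requires that the parameter-dependence of $h(\cdot;a,b,c)$ (inherited from the contractive construction in Proposition \ref{Prop3}) be continuously differentiable throughout $\mathcal{S}_A$, which I would obtain by differentiating the fixed-point equation for $h$ with respect to $(a,b,c)$ and invoking the same contraction estimates to control the derivative. Once this regularity is granted, everything else reduces to two short inequalities.
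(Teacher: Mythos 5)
Your proposal is correct and follows essentially the same route as the paper: the paper likewise uses the mean value theorem with the Jacobian bound \eqref{163} to get the contraction estimate, the triangle inequality with \eqref{162} to show $\mathbf{N}$ maps $\mathcal{S}_A$ into itself, and then invokes the contractive mapping theorem. The additional care you take over the $C^1$ regularity of $\mathbf{N}$ and the convexity of the ball is implicit in the paper's one-line appeal to the mean value theorem, so there is no substantive difference.
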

\begin{proof}
The mean-value theorem implies 
\begin{multline}
\| {\bf N} [{\bf A} ] - {\bf A}_0 \|_2 
\le \| {\bf N} [{\bf A}_0] - {\bf A}_0 \|_2 + 
\| {\bf N} [{\bf A}] - {\bf N} [{\bf A}_0 ] \|_2  
\le \rho_0 (1-\alpha) + \| {\bf J} \|_2 \rho_0   
\le \rho_0 
\end{multline}
and also, if ${\bf A}_1, {\bf A}_2 \in \mathcal{S}_A$:
$$
\| {\bf N} [{\bf A}_1 ] 
- {\bf N} [{\bf A}_2 ] \|_2 
\le \|{\bf J} \|_2 \| {\bf A}_1 - {\bf A}_2 \|_2  
\le \alpha \|{\bf A}_1 - {\bf A}_2 \|_2 
$$
Thus, \eqref{162} and \eqref{163} imply that $\mathbf{N}: \mathcal{S}_A 
\rightarrow \mathcal{S}_A$ and that it is contractive there; the result follows from the contractive mapping theorem. 
\end{proof}
\subsection{Proof  of Proposition \ref{Prop4} }
Proposition \ref{Prop4} follows from Lemma \ref{lemMatch}
once we we verify \eqref{162} and \eqref{163} hold. 
This has been 
shown \cite{BlasiusCT} for
$\alpha \le 0.764$ and that $ \| {\bf A}_0 - \mathbf{N} [{\bf A}_0] \|_2 
\le 1.16 \times 10^{-5} \le (1-\alpha) \rho_0$ thereby completing
the proof of Proposition
\ref{Prop4}.

\begin{Remark} 
\label{remCont}
\rm{
Note that the proof of Proposition \ref{Prop4}  only requires smallness of the norms of $h$ and $E$  (we recall that  $F=F_0+E$) and on no further details about them. If in some application  $F_0$ needs to be made $C^2$, then this can be ensured by iterating $\mathbf{N}$ with $h=E=0$;  the
first thirteen digits obtained in this way are given in (\ref{eqabcCont}).

}
\end{Remark}

\section{Generalization for $(\alpha, \gamma) \ne (0,0)$}

Here we consider for simplicity 
the special case $\gamma=0$, $\alpha \in \left [-\frac{3}{50}, \frac{3}{50} \right ]$.
Through piecewise polynomial representatons, other intervals in $\alpha$ can similarly
be incorporated; it is to be noted that non-existence of globally acceptable solution for some
ranges of $(\alpha, \gamma)$ is manifest
in the present
approach by lack of matching at $x=\frac{5}{2}$.

Let 
\begin{equation}
P\left(y;\beta\right)=\sum_{i=0}^{13}\sum_{j=0}^{5}\frac{p_{i,j}}{\left(i+1\right)\left(i+2\right)\left(i+3\right)}\beta^{j}y^{i}
\end{equation}
$ $where $p_{i,j}$ is the $\left(i+1,j\right)$-entry of the following
matrix 

\begin{equation}
\left[\begin{array}{cccccc}
{\frac{29589}{493148}} & -{\frac{9845}{82042}} & -{\frac{274}{40132715}} & {\frac{241}{11270972}} & -{\frac{422}{16143111}} & {\frac{308}{28130517}}\\
\noalign{\medskip}{\frac{15185}{1706376}} & -{\frac{17096}{473735}} & {\frac{36599}{968864}} & -{\frac{19441}{3418968}} & {\frac{6287}{892276}} & -{\frac{10649}{3570017}}\\
\noalign{\medskip}-{\frac{203116}{65155}} & -{\frac{3042}{970153}} & -{\frac{15440}{235863}} & {\frac{21239}{89058}} & -{\frac{114887}{372923}} & {\frac{5024}{37953}}\\
\noalign{\medskip}-{\frac{72804}{75433}} & {\frac{239497}{147253}} & {\frac{213995}{192583}} & -{\frac{110079}{28121}} & {\frac{1322305}{259224}} & -{\frac{80021}{35684}}\\
\noalign{\medskip}{\frac{106800}{43663}} & -{\frac{112122}{86717}} & -{\frac{155285}{19732}} & {\frac{525204}{17519}} & -{\frac{2029749}{49136}} & {\frac{391166}{20741}}\\
\noalign{\medskip}-{\frac{387344}{32609}} & {\frac{77473}{4402}} & {\frac{304475}{15867}} & -{\frac{3049469}{26658}} & {\frac{445437}{2501}} & -{\frac{568723}{6514}}\\
\noalign{\medskip}{\frac{3084825}{27611}} & -{\frac{1006071}{9319}} & {\frac{171511}{4286}} & {\frac{3723623}{24721}} & -{\frac{1097313}{2915}} & {\frac{1207261}{5453}}\\
\noalign{\medskip}-{\frac{2254258}{5883}} & {\frac{3595213}{9561}} & -{\frac{1049674}{2379}} & {\frac{2081034}{4399}} & {\frac{1013365}{19943}} & -{\frac{1249672}{5459}}\\
\noalign{\medskip}{\frac{1915077}{2126}} & -{\frac{3165632}{3527}} & {\frac{5196992}{3543}} & -{\frac{3429722}{1327}} & {\frac{3839299}{2153}} & -{\frac{2755673}{9363}}\\
\noalign{\medskip}-{\frac{2860297}{1927}} & {\frac{3706169}{2627}} & -{\frac{5245388}{1929}} & {\frac{1764108}{317}} & -{\frac{6522639}{1366}} & {\frac{1111693}{833}}\\
\noalign{\medskip}{\frac{281944}{179}} & -{\frac{3174435}{2257}} & {\frac{5003871}{1621}} & -{\frac{7633149}{1117}} & {\frac{6098777}{958}} & -{\frac{9281007}{4606}}\\
\noalign{\medskip}-{\frac{2506157}{2481}} & {\frac{2704059}{3157}} & -{\frac{8285683}{3873}} & {\frac{6455381}{1295}} & -{\frac{4186545}{863}} & {\frac{3106817}{1912}}\\
\noalign{\medskip}{\frac{2072736}{5813}} & -{\frac{1425478}{4881}} & {\frac{3778762}{4529}} & -{\frac{980233}{486}} & {\frac{3100252}{1537}} & -{\frac{4063417}{5821}}\\
\noalign{\medskip}-{\frac{1051227}{19699}} & {\frac{745495}{17357}} & -{\frac{1839247}{13071}} & {\frac{1844827}{5276}} & -{\frac{2241089}{6290}} & {\frac{3813801}{30274}}
\end{array}\right].
\end{equation}
 
\begin{Theorem}
For any $\alpha\in\left[-\frac{3}{50},\frac{3}{50}\right]$, let $F_{0}$
be defined by 
\begin{equation}
F_{0}\left(x;\alpha\right)=\begin{cases}
\alpha+\frac{x^{2}}{2}+x^{3}P\left(\frac{2}{5}x;\frac{25}{3}\alpha+\frac{1}{2}\right)\quad & x\in\left[0,\frac{5}{2}\right]\\
ax+b+\sqrt{\frac{a}{2t\left(x\right)}}q_{0}\left(t\left(x\right)\right) & x\in\left(\frac{5}{2},\infty\right)
\end{cases}
\end{equation}
where $t\left(x\right)$ and $q_{0}\left(t\right)$ are as defined
in \eqref{10.2} and \eqref{10.1}. Let $a_{0}\left(\alpha\right)$, $b_{0}\left(\alpha\right)$,
and $c_{0}\left(\alpha\right)$ be defined by 
\begin{eqnarray}
a_{0}\left(\alpha\right) & = & {\frac{3221}{1946}}-{\frac{797}{603}}\,\alpha+{\frac{176}{289}}\,{\alpha}^{2}\\
b_{0}\left(\alpha\right) & = & -{\frac{2763}{1765}}+{\frac{761}{284}}\,\alpha-{\frac{194}{237}}\,{\alpha}^{2}\\
c_{0}\left(\alpha\right) & = & {\frac{377}{1613}}+{\frac{174}{1357}}\,\alpha+{\frac{937}{6822}}\,{\alpha}^{2}.
\end{eqnarray}
Then, for each $\alpha\in\left[-\frac{3}{50},\frac{3}{50}\right]$,
there exists a unique triple $\left(a,b,c\right)$ close to $\left(a_{0}\left(\alpha\right),b_{0}\left(\alpha\right),c_{0}\left(\alpha\right)\right)$
in the sense that $\left(a,b,c\right)\in\mathcal{S}_{\alpha}$ where
\begin{equation}
\mathcal{S}_{\alpha}=\left\{ \left(a,b,c\right)\in\mathbb{R}^{3}:\sqrt{\left(a-a_{0}\left(\alpha\right)\right)^{2}+\frac{1}{4}\left(b-b_{0}\left(\alpha\right)\right)^{2}+\frac{1}{4}\left(c-c_{0}\left(\alpha\right)\right)^{2}}\le\rho_{0}:=5\times10^{-4}\right\} 
\end{equation}
with the property that $F_{0}$ is a representation of the actual
solution $F$ to the initial value problem \eqref{3} and \eqref{4.0} with $\gamma=0$ and  $\alpha\in\left[-\frac{3}{50},\frac{3}{50}\right]$ within small errors. More precisely,
\begin{equation}
F\left(x;\alpha\right)=F_{0}\left(x;\alpha\right)+E\left(x;\alpha\right),
\end{equation}
where the error term $E$ satisfies 
\begin{equation}
\label{Error finite}
\Vert E^{\prime\prime}\Vert_{\infty}\le4.90\times10^{-6},\Vert E^{\prime}\Vert_{\infty}\le3.75\times10^{-6},\Vert E\Vert_{\infty}\le7.50\times10^{-6}\quad\text{on \ensuremath{\left[0,\frac{5}{2}\right]}, }
\end{equation}
where $\Vert E\Vert_{\infty}:=\sup\left\{ \left|E\left(x;\alpha\right)\right|:x\in\left[0,\frac{5}{2}\right],\alpha\in\left[-\frac{3}{50},\frac{3}{50}\right]\right\} $,
and for $x\ge\frac{5}{2}$, 
\begin{gather}
\left|E\left(x;\alpha\right)\right|\le1.76\times10^{-5}t\left(x\right)^{-2}e^{-3t\left(x\right)},\left|\frac{\partial}{\partial x}E\left(x;\alpha\right)\right|\le9.82\times10^{-5}t\left(x\right)^{-3/2}e^{-3t(x)},\nonumber \\
\left|\frac{\partial^{2}}{\partial x^{2}}E\left(x;\alpha\right)\right|\le5.50\times10^{-4}t\left(x\right)^{-1}e^{-3t\left(x\right)}.\label{Error far field}
\end{gather}
\end{Theorem}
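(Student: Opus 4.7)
The plan is to mirror the three-step structure that underlies Theorem~\ref{Thm1}, now carrying every estimate uniformly in $\alpha \in [-3/50, 3/50]$. By construction $F_{0}(0;\alpha) = \alpha$, $F_{0}'(0;\alpha) = 0$, and $F_{0}''(0;\alpha) = 1$ (the correction term carries an $x^{3}$ factor and so contributes nothing to $F_{0}$, $F_{0}'$, $F_{0}''$ at the origin), so the decomposition $F = F_{0}(\cdot;\alpha) + E(\cdot;\alpha)$ yields an error $E$ satisfying the linearized problem $\mathcal{L}[E] = -EE'' - R(\cdot;\alpha)$ with zero initial data, exactly as in Proposition~\ref{Prop2}. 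Here $R(x;\alpha) := F_{0}'''(x;\alpha) + F_{0}(x;\alpha) F_{0}''(x;\alpha)$ is a polynomial in $(x,\beta)$ with $\beta := (25/3)\alpha + 1/2 \in [0,1]$. To bound $\|R\|_\infty$ uniformly in $\alpha$, I would subdivide $[0, 5/2]$ as in Section~\ref{R(x)} and on each subinterval re-expand $R$ in a tensor-product Chebyshev (or local Taylor) basis in $(\tau,\beta)$, computing the extrema of the low-order part on $[-1,1]^{2}$ and bounding the tail by the absolute sum of the remaining coefficients. The same procedure supplies uniform bounds on $F_{0}, F_{0}', F_{0}''$ over the subintervals and over $\alpha$.

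Next I would rerun the Green's function / energy argument of Section~\ref{subsec:Green}. The identity \eqref{12.14.1} together with Cauchy--Schwarz and Gronwall produces constants $M_{j}(\alpha)$ and $M(\alpha)$ through \eqref{12.14.9}--\eqref{12.14.18}, whose integrands \eqref{12.14.6.0}--\eqref{12.14.10.0.0} are polynomial in $\alpha$ as well as in $x$. Taking suprema over $\alpha$ yields uniform resolvent constants on the three subintervals $[0, x_{c}]$, $[x_{c}, 2]$, $[2, 5/2]$. Combined with the uniform residual bound, the contraction argument of Proposition~\ref{Prop2} closes and delivers \eqref{Error finite}; the modest inflation of the constants compared with the $\alpha=0$ case reflects both the range of $\alpha$ and the higher bidegree of the ansatz $P(\cdot;\beta)$.

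The far-field analysis on $x \ge 5/2$ requires no new work: Proposition~\ref{Prop3} is stated purely in terms of $(a,b,c)$ with $a > 0$, $|c| < 1/4$, $t \ge 1.99$ and is oblivious to the initial data. Writing $E(x;\alpha) = \sqrt{a/(2 t(x))}\,\mathcal{E}(t(x))$ and translating \eqref{eqn:23}--\eqref{eqn:25} into $x$-derivative bounds reproduces \eqref{Error far field} exactly as in the passage from Proposition~\ref{Prop3} to Theorem~\ref{Thm1}, provided the matched $(a,b,c)$ stays in the allowed range.

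The main obstacle is the matching step, now an $\alpha$-parametrized fixed-point problem. For each $\alpha$ one sets up $\mathbf{A} = \mathbf{N}[\mathbf{A};\alpha]$ as in \eqref{16}--\eqref{17} and applies Lemma~\ref{lemMatch} with center $\mathbf{A}_{0}(\alpha) = (a_{0}(\alpha), b_{0}(\alpha)/2, c_{0}(\alpha)/2)$ and radius $\rho_{0} = 5\times 10^{-4}$. The contractivity hypothesis \eqref{163} requires bounding $\|\mathbf{J}[\mathbf{A};\alpha]\|_{2}$ uniformly over $\mathbf{A}$ in the ball of radius $\rho_{0}$ about $\mathbf{A}_{0}(\alpha)$ and over $\alpha \in [-3/50, 3/50]$, which can be carried out by a Chebyshev or Taylor expansion in the four variables together with the exponentially small tail bounds on $h$ supplied by Proposition~\ref{Prop3}. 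Condition \eqref{162} asserts that the quadratic-in-$\alpha$ ans\"atze for $(a_{0}, b_{0}, c_{0})$ agree with the true matching values to within $(1-\alpha_{\ast})\rho_{0}$ throughout $[-3/50, 3/50]$; the ten-fold enlargement of $\rho_{0}$ relative to Theorem~\ref{Thm1} is precisely what affords this slack. Both verifications are mechanical but quantitatively delicate, and their tolerances ultimately determine the constants in \eqref{Error finite} and \eqref{Error far field}.
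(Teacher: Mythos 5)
Your architecture coincides with the paper's: a residual bound on $\left[0,\frac52\right]$ uniform in $\alpha$ obtained from the $l^1$-norm of Chebyshev coefficients in both $x$ and the rescaled parameter, uniform energy/Green's-function constants $M,M_1,M_2,M_3$ over subregions of $\mathcal{I}\times\mathcal{J}$ via \eqref{12.14.9}--\eqref{12.14.18}, a contraction argument for $E$ yielding \eqref{Error finite}, the far-field representation of Proposition \ref{Prop3}, and an $\alpha$-parametrized application of Lemma \ref{lemMatch} with the enlarged radius $\rho_0=5\times10^{-4}$. The only differences in the finite-interval part are implementation details (the paper partitions $\mathcal{I}$ at $\{0,1.25,1.4,2,2.5\}$ rather than reusing $[0,x_c]$, $[x_c,2]$, $[2,2.5]$, and notes that the functions $Q_1,Q_2,Q$ need a modified construction).

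The one step that would fail as written is your claim that the far-field analysis ``requires no new work'' because Proposition \ref{Prop3} already covers $t\ge T\ge 1.99$. With $\rho_0$ enlarged to $5\times10^{-4}$ and $\alpha$ ranging over $\left[-\frac{3}{50},\frac{3}{50}\right]$, the matching point $t_m=\frac{a}{2}\left(\frac52+\frac{b}{a}\right)^2$ is only guaranteed to lie in $(1.962257\cdots,\,2.043219\cdots)$, so its infimum drops below $1.99$ and the hypothesis of Proposition \ref{Prop3} is violated for part of the parameter range; the proposition cannot be invoked verbatim. The paper re-establishes the far-field contraction on the larger domain $t\ge T\ge 1.96$, which degrades the constants in \eqref{eqn:23}--\eqref{eqn:25} from $1.6667\times10^{-4}$ to $1.6955\times10^{-4}$, and this is precisely the source of the slightly larger constants in \eqref{Error far field} compared with \eqref{eq15}. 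The repair is routine (the same integral-equation contraction goes through for $T\ge1.96$ with marginally worse bounds), but it must actually be carried out, and your caveat about $(a,b,c)$ ``staying in the allowed range'' does not capture it: the allowed range of $t$ itself has to be extended.
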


\subsection{Results in the interval $\mathcal{I}=\left[0,\frac{5}{2}\right]$}

Let $\mathcal{I}:=\left[0,\frac{5}{2}\right]$ and $\mathcal{J}:=\left[-\frac{3}{50},\frac{3}{50}\right]$.
For any $\alpha\in\mathcal{J}$, the residual $R\left(x;\alpha\right)$
on the interval $x\in\mathcal{I}$ is given by 
\begin{eqnarray}
R\left(x;\alpha\right) & = & \frac{\partial^{3}}{\partial x^{3}}F_{0}\left(x;\alpha\right)+F_{0}\left(x;\alpha\right)\frac{\partial^{2}}{\partial x^{2}}F_{0}\left(x;\alpha\right).
\end{eqnarray}
Let $\left\{ x_{k}\right\} _{k=0}^{4}=\left\{ 0,1.25,1.4,2,2.5\right\} .$
Then $\mathcal{I}_{k}:=\left[x_{k-1},x_{k}\right]$, $k=1,..,4$,
provide a partition of $\mathcal{I}$.

\subsubsection{Sizes of $R$}

Let the sup-norms of $R$ over different subintervals be distinguished
as follows: 
\begin{gather}
\Vert R\Vert_{\infty,\mathcal{I}_{k}}:=\sup\left\{ \left|R\left(x;\alpha\right)\right|:x\in\mathcal{I}_{k},\alpha\in\mathcal{J}\right\} \,\text{for}\, k=1,...,4.
\end{gather}
The following estimates are obtained from the $l^{1}$-norm of the
coefficients of Chebyshev expansion of $R$ on appropriate intervals
of $x$ and $\alpha$ as described in Subsection \ref{R(x)}.

\begin{gather}
\Vert R\Vert_{\infty,\mathcal{I}_{1}}\le5.18\times10^{-7},\quad\Vert R\Vert_{\infty,\mathcal{I}_{2}}\le7.55\times10^{-7}\label{eq:Rbnd 1,2}\\
\Vert R\Vert_{\infty,\mathcal{I}_{3}}\le1.31\times10^{-6},\quad\Vert R\Vert_{\infty,\mathcal{I}_{4}}\le2.94\times10^{-6}.\label{eq:Rbnd 3,4}
\end{gather}
As a result, $\Vert R\Vert_{\infty,\mathcal{I}}:=\sup\left\{ \left|R\left(x;\alpha\right)\right|:x\in\mathcal{I},\alpha\in\mathcal{J}\right\} $
is smaller than $2.94\times10^{-6}$.

\subsubsection{Various supremema}
\def\I{\mathcal{I}}
\def\J{\mathcal{J}}
\def\eps{\varepsilon}

These figures are obtained by using 'energy' bounds as shown in Subsection
\ref{subsec:Green}, in particular, by using the inequalities \eqref{12.14.9}-\eqref{12.14.18}. The
modified construction of functions, $Q_{1},Q_{2},$ and $Q$ appearing
in \eqref{12.14.9}-\eqref{12.14.18} is found in \cite{Kim}.


\begin{table}
\caption{The bounds of various suprema on subregions $\I_k\times\J$.}
\label{tab:suprema}
\begin{center}
\begin{tabular}{|c||c|c|c|c|}
\hline                              
 & $M$ & $M_{1}$ & $M_{2}$ & $M_{3}$\tabularnewline
\hline
\hline
$\I_{1}\times \J$ & $3.1930$ & $3.0482$ & $2.1323$ & $1.5886$\tabularnewline
$\I_{2}\times \J$ & $0.3912$ & $0.3323$ & $0.0284$ & $1.0001$\tabularnewline
$\I_{3}\times \J$ & $0.7762$ & $0.5465$ & $0.1701$ & $1.0020$\tabularnewline
$\I_{4}\times \J$ & $0.7077$ & $0.3120$ & $0.0775$ & $1.0008$\tabularnewline
\hline
\end{tabular}
\par\end{center}
\end{table}

\subsubsection{Error estimates}

Using the contractive mapping principle \cite{BlasiusCT}, \cite{Kim} and the above
result, we obtain the following error bounds on the subintervals.
Compared with the errors against numerically calculated solution, these
estimates turned out to be 10 to 20-fold over-estimates. 


\begin{table}
\caption{The error estimates on subintervals}
\label{tab:error}
\begin{center}
\begin{tabular}{|c|c|c|c|c|c|}
\hline 
 & $B_{0}$ & $\eps$ & $\Vert E\Vert_{\infty,\mathcal{I}_{j}}$ & $\Vert E^{\prime}\Vert_{\infty,\mathcal{I}_{j}}$ & $\Vert E^{\prime\prime}\Vert_{\infty,\mathcal{I}_{j}}$\tabularnewline
\hline 
\hline 
$\mathcal{I}_{1}$ & $1.6538\times10^{-6}$ & $5\times10^{-6}$ & $1.6538\times10^{-6}$ & $2.0673\times10^{-6}$ & $1.2921\times10^{-6}$\tabularnewline
\hline 
$\mathcal{I}_{2}$ & $2.4371\times10^{-6}$ & $7\times10^{-7}$ & $2.4371\times10^{-6}$ & $3.6556\times10^{-7}$ & $1.6296\times10^{-6}$\tabularnewline
\hline 
$\mathcal{I}_{3}$ & $4.3873\times10^{-6}$ & $3\times10^{-6}$ & $4.3873\times10^{-6}$ & $2.6324\times10^{-6}$ & $2.6386\times10^{-6}$\tabularnewline
\hline 
$\mathcal{I}_{4}$ & $7.4947\times10^{-6}$ & $4\times10^{-6}$ & $7.4947\times10^{-6}$ & $3.7474\times10^{-6}$ & $4.8916\times10^{-6}$\tabularnewline
\hline 
\end{tabular}
\par\end{center}
\end{table}

This yields the error bounds \eqref{Error finite}.

\subsection{Results in the interval $t\ge T\ge1.96$}

Let $a_{l}\left(\alpha\right)=a_{0}\left(\alpha\right)-\rho_{0}$,
$a_{r}\left(\alpha\right)=a_{0}\left(\alpha\right)+\rho_{0}$, $b_{l}\left(\alpha\right)=b_{0}\left(\alpha\right)-2\rho_{0}$,
$b_{r}\left(\alpha\right)=b_{0}\left(\alpha\right)+2\rho_{0}$, $c_{l}\left(\alpha\right)=c_{0}\left(\alpha\right)-2\rho_{0}$,
$c_{r}\left(\alpha\right)=c_{0}\left(\alpha\right)+2\rho_{0}$. Define
$t_{m}=\frac{a}{2}\left(\frac{5}{2}+\frac{b}{a}\right)^{2}$ and note
that $x\in[\frac{5}{2},\infty)$ corresponds to $t\in[t_{m},\infty)$.
Note that, for each $\alpha\in\mathcal{J}$, $\left(a,b,c\right)\in\mathcal{S}_{\alpha}$
implies that $a\in\left[a_{l}\left(\alpha\right),a_{r}\left(\alpha\right)\right]$,
$b\in\left[b_{l}\left(\alpha\right),b_{r}\left(\alpha\right)\right]$,
and $c\in\left[c_{l}\left(\alpha\right),c_{r}\left(\alpha\right)\right]$.
Since $a_{0}$ and $b_{0}$ are quadratic functions in $\alpha$,
simple calculations show that $0<a_{l}\left(\alpha\right)$ and $b_{r}\left(\alpha\right)<0$
on the interval $\mathcal{J}$, and so 
\begin{equation}
\frac{a_{l}\left(\alpha\right)}{2}\left(\frac{5}{2}+\frac{b_{l}\left(\alpha\right)}{a_{l}\left(\alpha\right)}\right)^{2}<t_{m}<\frac{a_{r}\left(\alpha\right)}{2}\left(\frac{5}{2}+\frac{b_{r}\left(\alpha\right)}{a_{r}\left(\alpha\right)}\right)^{2},\quad\alpha\in\mathcal{J}.
\end{equation}
Thus it must be the case that 
\begin{gather}
t_{m}\in\left(\inf_{\alpha\in\mathcal{J}}\left\{ \frac{a_{l}\left(\alpha\right)}{2}\left(\frac{5}{2}+\frac{b_{l}\left(\alpha\right)}{a_{l}\left(\alpha\right)}\right)^{2}\right\} ,\sup_{\alpha\in\mathcal{J}}\left\{ \frac{a_{r}\left(\alpha\right)}{2}\left(\frac{5}{2}+\frac{b_{r}\left(\alpha\right)}{a_{r}\left(\alpha\right)}\right)^{2}\right\} \right)\nonumber \\
=\left(1.962257\cdots,2.043219\cdots\right).
\end{gather}
So, provided that $a>0$, the domain $t\ge T\ge1.96$ corresponds
to the domain $x\ge-\frac{b}{a}+\sqrt{\frac{2T}{a}}$, with $T\ge1.96$.
On this domain, the inequalities \eqref{eqn:23}-\eqref{eqn:25} are modified to have the constant
$1.6955\times10^{-4}$ in place of $1.667\times10^{-4}$. With these
modified inequalities, we obtain \eqref{Error far field}.
For the details of the proof and the computation, see \cite{Kim}. 
\[
\]

\subsection{Mathcing of the two solutions}

For each $\alpha\in\mathcal{J}$, define $\mathbf{A}_{0}\left(\alpha\right)=\left(a_{0}\left(\alpha\right),b_{0}\left(\alpha\right),c_{0}\left(\alpha\right)\right)$.
Let $\mathbf{A}$, $\mathbf{N}\left[\mathbf{A}\right]$,
and $\mathbf{J}$ be defined as in Definition \ref{defn:5}. In addition,
for each $\alpha\in\mathcal{J}$, define
\begin{equation}
\mathcal{S}_{\mathbf{A},\alpha}=\left\{ \mathbf{A}\in\mathbb{R}^{3}:\Vert\mathbf{A}-\mathbf{A}_{0}\left(\alpha\right)\Vert_{2}\le\rho_{0}:=5\times10^{-4}\right\} 
\end{equation}
where $\Vert\cdot\Vert_{2}$ is the Euclidean norm. \cite{Kim} shows that
for each $\alpha\in\mathcal{J}$ 
\begin{gather}
\label{Ndiff}\Vert\mathbf{A}_{0}\left(\alpha\right)-\mathbf{N}\left[\mathbf{A}_{0}\left(\alpha\right)\right]\Vert_{2}\le4.15\times10^{-5}\\
\label{J}\sup_{\mathbf{A}\in\mathcal{S}_{A}}\Vert\mathbf{J}\left(\alpha\right)\Vert_{2}\le0.839.
\end{gather}
\eqref{Ndiff} and \eqref{J} satisfy the conditions in Lemma \ref{lemMatch} for each $\alpha\in\mathcal{J}$
and it follows that the two solutions match at $x=\frac{5}{2}$ for
any value of $\alpha\in\mathcal{J}$. 

\section{Acknowledgments} The work of O.C. and S.T  was partially supported by the NSF
grant DMS 1108794.

\vfill \eject

\begin{thebibliography}{99}
\bibitem{Abramowitz} M Abramowitz and I A Stegun, 
  {\em Handbook of mathematical
  functions with formulas, graphs, and mathematical tables} 
  New York :
  Wiley-Interscience (1970). 
\bibitem{Costinetal} O. Costin, M. Huang and W. Schlag, {\em On the spectral properties of $L_{\pm{}}$ in three dimensions},
{\bf Nonlinearity}, 25, pp. 125-164 (2012).

\bibitem{Dubrovin} O.Costin, M. Huang and S. Tanveer, {\em Proof of the Dubrovin conjecture and analysis of the
tritronque solutions of PI}, Submitted, http://arxiv.org/abs/1209.1009

\bibitem{Aref} H. Aref, {\em Chaotic Advection of Fluid Partiles}, {\bf Phil. Trans R. Soc.}, 333, (1631), pp 273-288 (1990).

\bibitem{Water} S.Tanveer, {\em Analytical approximation for 2-D nonlinear steady deep water waves}, 
Under Preparation.  


\bibitem{Blasius} H. Blasius, {\em ‘Grenzschicten in Flussigkeiten mit kleiner Reibung’}, {\bf Zeitschr. Math. Phys.}, 56,
1-37 (1908)

\bibitem{BlasiusCT} O.Costin, S.Tanveer, {\em Analytical approximation of Blasius' similarity solution 
with rigorous error bounds}, Submitted, http://arxiv.org/abs/1303.1416

\bibitem{Kim} T. Kim, {\em Ph.D Thesis}, Mathematics Department, OSU, Under preparation.


\bibitem{Weyl} H. Weyl, {\em On the differential equations of the simplest boundary layer problems}, {\bf Ann. Math},
43, 381-407 (1942).

\bibitem{Callegari} A. J. Callegari and M. B. Friedman, {\em An analytical solution of a nonlinear singular boundary
value problem in the theory of viscous fluids}, {\bf J. Math. Anal. Appl.} , 21, 510-529, 1968.

\bibitem{Hussaini} M.Y. Hussaini and W.D. Laikin, {\em Existence and non-uniqueness of similarity solutions of a
boundary layer problem}, {\bf Quart. J. Mech. Appl. Math.}, 39:1 pp 15-24 (1986)

\bibitem{Brighi} B. Brighi and J.-D. Hoernel, {\em Recent advances on similarity solutions arising during free
convection, Progress in Nonlinear Differential Equations and their Applications}, V 63,
Birkhauser, pp 83-92 (2005).

\bibitem{Liao} S-J Liao, {\em A uniformly valid analytic solution of two-dimensional viscous flow over a semiinfinite
flat plate, J. Fluid Mech.}, {\bf 385}, pp 101-128.

\bibitem{Duke} O. Costin, {\em On Borel Summation and Stokes Phenomena for
Rank-1 Nonlinear Systems of Ordinary Differential Equations}, {\bf Duke
Math. J.} {\bf 93}, No.2, 289 (1998).

\bibitem{Topfer} T\"opfer, Zeitschr. Math. Phys. 60, pp. 397--398 (1912).
\end{thebibliography}
\end{document}